\documentclass{article}

\usepackage[english]{babel}
\usepackage[utf8]{inputenc}
\usepackage[letterpaper,top=2cm,bottom=2cm,left=3cm,right=3cm,marginparwidth=1.75cm]{geometry}
\usepackage{booktabs}

\usepackage{graphicx}
\usepackage{caption}
\usepackage{subcaption}

\usepackage{amsmath}
\usepackage{amsthm}
\usepackage{amsfonts}
\usepackage{amssymb}
\usepackage{mathtools} 
\usepackage{bm}
\usepackage{bbm} 
\usepackage{nicematrix}
\usepackage[normalem]{ulem}

\newtheorem{remark}{Remark}
\newtheorem{theorem}{Theorem}

\newtheorem{proposition}{Proposition}

\newcommand{\dd}{\mathrm{d}}

\newcommand{\R}{\mathbb{R}}

\usepackage{tikz}
\usetikzlibrary{decorations.markings}
\usetikzlibrary{arrows.meta} 

\usepackage{algorithm}
\usepackage{algpseudocode}

\usepackage{booktabs} 
\usepackage{multirow} 

\usepackage{todonotes}

\usepackage[colorlinks=true, allcolors=blue]{hyperref}

\usepackage{comment}

\definecolor{carrotorange} {rgb}{0.93, 0.57, 0.13}

\title{An Optimal Energy Production Problem with Energy Source Switching and Load Following Nuclear Power Plants}
\author{
  Fabio Baschetti \\
  \scriptsize{Department of Economics -- University of Verona} \\
  \scriptsize{Via Cantarane 24, 37129, Verona, Italy} \\
  \footnotesize{\texttt{fabio.baschetti@univr.it}} 
  \and
  Alessandro Gnoatto \\
  \scriptsize{Department of Economics -- University of Verona} \\
  \scriptsize{Via Cantarane 24, 37129, Verona, Italy} \\
  \footnotesize{\texttt{alessandro.gnoatto@univr.it}} 
  \and
  Athena Picarelli \\
  \scriptsize{Department of Economics -- University of Verona} \\
  \scriptsize{Via Cantarane 24, 37129, Verona, Italy} \\
  \footnotesize{\texttt{athena.picarelli@univr.it}}
}

\begin{document}
\maketitle
% \begin{abstract}
%     The paper deals with a optimal energy production problem where the producer aims to match at any time a prescribed demand managing two types of energy sources: a renewable energy source subject to randomness and seasonality on one hand, and a controllable deterministic system of production (e.g. nuclear) on the other hand. The problem is formulated as a optimal switching problem over three possible regimes representing the increasing/decreasing/constancy of the nuclear production rate.
% \end{abstract}

\begin{abstract}
The integration of weather-dependent renewable generation increases the volatility of residual demand and raises the value of dispatchable low-carbon flexibility. This paper studies the optimal operation of a load-following nuclear power plant owned by a producer that must balance stochastic residual demand while accounting for ramping limits and costly changes in operating regimes. Nuclear output can be increased, decreased, or kept constant, and the production decision is formulated as a finite-horizon optimal switching problem. We analyze both a closed-economy benchmark, where excess production cannot be sold and shortages require costly back-up generation, and an open-economy setting, where the producer can trade electricity at prices driven by aggregate market residual demand. The value functions are characterized as viscosity solutions of a system of Hamilton--Jacobi--Bellman quasi-variational inequalities, and optimal policies are computed using a monotone semi-Lagrangian scheme. The numerical results show how shortage costs, switching costs, ramping capability, and market access shape optimal nuclear load following. The analysis highlights the economic value of controllable low-carbon capacity in renewable-intensive systems and provides implications for flexibility remuneration, balancing-market design, and interconnection policy.
\end{abstract}

\noindent\textbf{Keywords:} Residual demand; Nuclear load following; Optimal switching; Renewable integration; Electricity markets; Balancing markets; Low-carbon flexibility.

\medskip

\noindent\textbf{JEL classification:} C61; C63; L94; Q41; Q42; Q48.

\section{Introduction}

The global commitment to decarbonization is reshaping electricity markets and placing increasing emphasis on the integration of renewable sources in the energy system. As wind and solar technologies expand, their weather-driven stochasticity creates operational and economic challenges for producers, system operators, and regulators. Electricity demand must be satisfied continuously, while renewable output fluctuates across intraday, weekly, and seasonal horizons. In this environment, the relevant balancing variable is residual demand, defined as electricity consumption net of renewable generation. Residual demand determines the need for dispatchable low-carbon generation, storage, imports, curtailment, or fossil-fuel back-up in the worst-case scenario.

Several technologies can contribute to managing renewable intermittency. Countries with large hydroelectric resources, such as Norway, can use reservoir and pumped-storage capacity as part of the balancing solution. Figure~\ref{fig:norway-hydro-modulation} illustrates this mechanism for Norway in 2019: reservoir hydropower generation varies substantially over the year and co-moves with residual demand, highlighting the role of stored water as a source of dispatchable flexibility.  Battery storage can provide fast and localized flexibility, but grid-scale storage remains costly when the objective is to cover prolonged periods of low renewable generation. These constraints create a role for low-carbon production technologies that are dispatchable, predictable, and available at scale. Existing nuclear fission technologies are one such source, while fusion energy may play a related role in the second half of the twenty-first century; see \cite{CABAL20171} and \cite{BUSTREO2019}. The role of nuclear fission in the green transition has recently been reconsidered in several developed economies. For example, the Joint Research Centre of the European Commission concludes that there is no ``science-based evidence that nuclear energy does more harm to human health or to the environment than other electricity production technologies already included in the Taxonomy as activities supporting climate change mitigation'' \cite[p.~3]{jrc2021}.

France provides a useful empirical motivation for the problem studied in this paper. It combines a historically large nuclear fleet, growing renewable production, and substantial cross-border market integration. Figure~\ref{fig:france-nuclear-modulation} reports French nuclear generation and residual demand for 2019 using RTE \'{e}CO2mix definitive data. The figure shows that nuclear production varies substantially over the year and co-moves with residual demand. In 2019, the mean daily nuclear range is about 3.3 GW, the 95th percentile of the daily range is about 9.2 GW, and the correlation between daily average nuclear generation and daily average residual demand is about 0.80. These patterns illustrate the economic relevance of nuclear modulation in a high-nuclear power system.

\begin{figure}[t]
    \centering
    \includegraphics[width=0.95\textwidth]{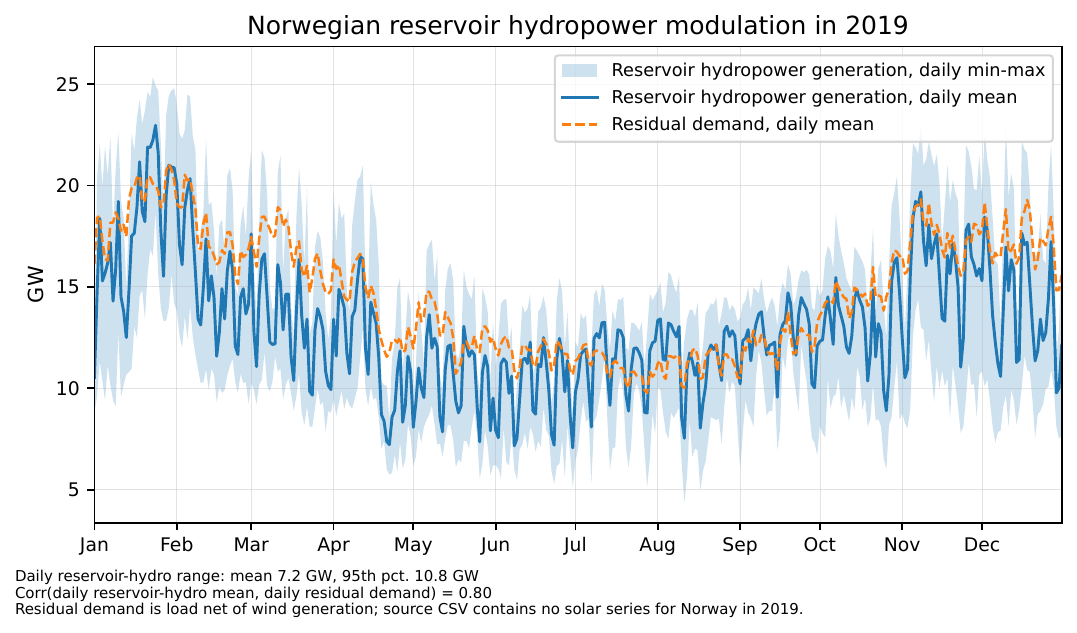}
    \caption{Norwegian reservoir hydropower generation and residual demand in 2019. The figure reports hourly Energy-Charts observations aggregated to daily series. The shaded region is the daily minimum--maximum range of reservoir hydropower generation, the solid line is daily mean reservoir hydropower generation, and the dashed line is daily mean residual demand, defined here as load net of wind generation. The observed co-movement illustrates how reservoir hydropower can provide flexible low-carbon generation in a hydro-rich system.}
\label{fig:norway-hydro-modulation}
\end{figure}

\begin{figure}[t]
    \centering
    \includegraphics[width=0.95\textwidth]{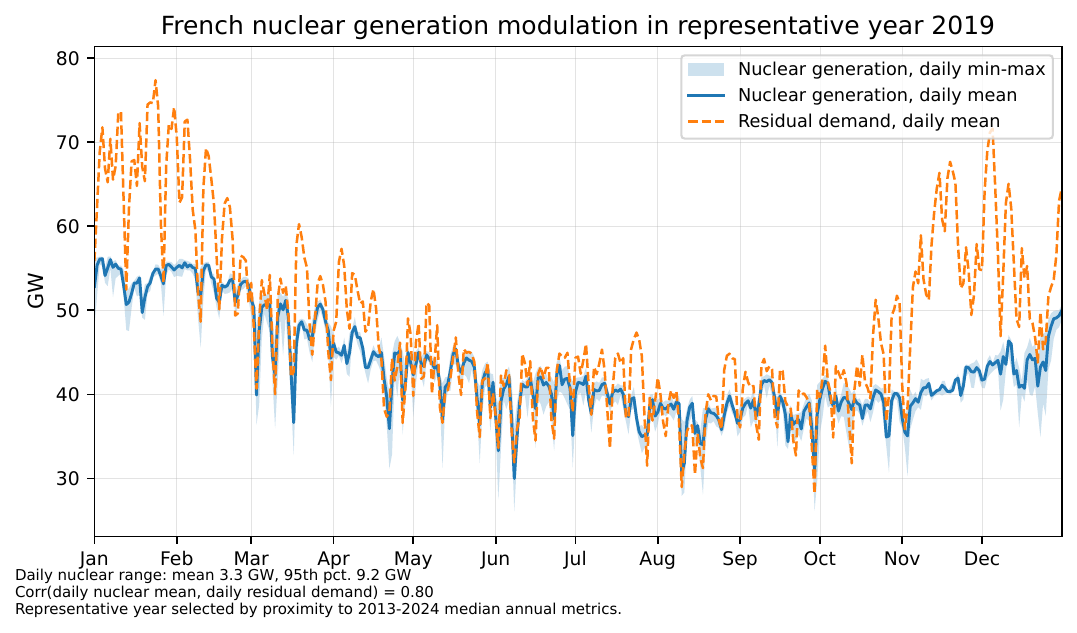}
    \caption{French nuclear generation and residual demand in a representative year. The figure reports daily observations constructed from RTE \'{e}CO2mix definitive half-hourly data. Nuclear generation is shown by its daily mean and daily minimum--maximum range. Residual demand is defined as electricity consumption minus wind and solar generation.}
    \label{fig:france-nuclear-modulation}
\end{figure}

In this work, we adopt the perspective of a large energy company operating a heterogeneous portfolio of production tools that includes renewable sources, nuclear power plants, and legacy fossil-fuel units. Renewable production is stochastic because it is driven by meteorological uncertainty. Electricity consumption is also stochastic around pronounced daily, weekly, and annual seasonal cycles. Nuclear and fossil generation are controllable from the point of view of the producer, although nuclear output is subject to technical limits, lower and upper production bounds, and finite maneuvering capability. Renewable output is dispatched with priority, reflecting both policy incentives and operational practice. The producer then uses nuclear generation to meet residual demand in the spirit of load following, while fossil generation is used only when nuclear capacity is insufficient or too costly to adjust.

The objective of the producer is to balance uncertain supply and demand while limiting reliance on carbon-intensive technologies and accounting for the cost of adjusting nuclear production. We model the maneuverability of the nuclear power plant and formulate an optimal switching problem governing nuclear production in real time. The plant can operate in three regimes: increasing production, decreasing production, or keeping production constant. Switching from one regime to another is costly, capturing the economic and technical frictions associated with maneuvering the plant. When nuclear production falls short of residual demand, the producer incurs a shortage cost that represents fossil back-up, imbalance penalties, emergency purchases, or reliability costs. When nuclear production exceeds residual demand, the producer incurs an excess-production cost in the closed-economy benchmark, where electricity cannot be sold externally or stored at scale.

We analyze two economic environments. The closed-economy case describes a producer that must balance domestic residual demand without access to an external electricity market. This benchmark isolates the physical and reliability value of nuclear flexibility. The open-economy case allows the producer to buy and sell electricity in an external market whose price depends on aggregate market residual demand. Market access changes the incentives for load following: excess nuclear production can have value when prices are high, while reducing production and purchasing electricity can be attractive when prices are low. The comparison between the two environments links nuclear operating decisions to market integration and to the design of flexibility incentives.

The paper makes four contributions. First, it provides a continuous-time stochastic formulation of nuclear load following under residual-demand uncertainty. Residual demand is modeled as a mean-reverting process with deterministic seasonal components, in line with electricity-market models in which load net of renewable generation is the key driver of conventional dispatch and prices. Second, it formulates the nuclear operating decision as a finite-horizon optimal switching problem. This representation is well suited to technologies whose output cannot be adjusted costlessly and instantaneously, because it explicitly accounts for ramping limits and costly changes in the direction of production. Third, it characterizes the value functions as the viscosity solution of a system of Hamilton--Jacobi--Bellman quasi-variational inequalities. Fourth, it develops a monotone semi-Lagrangian numerical scheme that exploits the switching structure of the problem and permits the computation of optimal policies under closed- and open-economy assumptions.

The numerical analysis studies how optimal nuclear operation depends on the economic and technical primitives of the model. In the closed economy, the plant follows residual demand more aggressively when shortage costs are high, maneuvering capability is large, or switching costs are low. Higher switching costs and lower shortage penalties generate wider inaction regions and smoother production profiles. In the open economy, optimal operation is additionally shaped by market-price regimes: the same local residual demand can lead to different nuclear decisions depending on whether external prices make purchases or sales attractive. These results are relevant for electricity systems that combine variable renewable generation with controllable low-carbon capacity, and for policy debates on flexibility remuneration, balancing markets, and interconnection.

The paper contributes to the literature in multiple directions. First, we study the interaction between nuclear generation and variable renewable energy. \cite{CANY2016135} analyze the compatibility between nuclear power and intermittent renewables in the French power mix. They show that higher wind and solar penetration reduces nuclear load factors and affects the levelized cost of nuclear generation, while also emphasizing that flexibility incentives may be needed if nuclear is to compete with gas-fired back-up. \cite{LOISEL2018} study nuclear load following in future European electricity systems and show that flexible nuclear operation can support renewable integration and improve welfare, although the benefits vary across systems depending on interconnection, technology mix, and the position of nuclear plants in the merit order. The technical and economic aspects of load following are also discussed by \cite{OECDNEA2011}, who emphasize that load following affects operating modes, fuel performance, component ageing, and plant economics. Scenario studies as in \cite{CABAL20171} and \cite{BUSTREO2019} finally examine how firm low-carbon technologies can complement renewables by reducing storage and back-up needs.

A second related literature studies residual demand and its effect on electricity prices. Residual demand---electricity demand net of renewable generation---is a key driver of conventional dispatch, imports, storage use, and wholesale prices. \cite{DO2021} provide an empirical analysis of residual electricity demand and show that residual demand is more stochastic and harder to forecast than total demand. This motivates modeling residual demand directly rather than separately modeling total demand and renewable generation. In electricity-market models, residual demand is also central to the merit-order mechanism because it determines which conventional technologies are needed to clear the market. 
 The increasing penetration of renewable generation strengthens the merit-order effect by displacing higher-cost conventional units and reducing wholesale electricity prices \cite{DeSianoSapio22}.
\cite{ACNHT:09} use residual demand in a risk-neutral model of electricity prices, while \cite{ANTWEILER2021} analyze the long-term merit-order effect of renewable generation on wholesale prices. More recently, \cite{ANTWEILER2025} study the viability of energy-only electricity markets in systems supplied by intermittent renewable generation and grid-scale storage.

A third strand of literature concerns stochastic optimization in power systems with renewable uncertainty. The rapid growth of weather-dependent generation has increased the need for models that account for uncertain supply, uncertain demand, balancing costs, and market prices. Reviews of stochastic optimization in renewable-energy applications emphasize that uncertainty representation and scenario generation are crucial for operational and planning decisions \cite{ZAKARIA2020, PVdams, ToufaniReview}. Related models consider stochastic dispatch, microgrid operation, demand response, storage, and market participation under uncertain renewable generation. Our approach is complementary to scenario-based stochastic programming because it formulates the problem in continuous time and uses dynamic programming to characterize a sequential, state-dependent operating policy affected by switching costs.

A final stream of related literature uses optimal switching to study systems that operate in a finite number of modes and where changing mode is costly. In energy applications, switching costs represent start-up costs, shutdown costs, adjustment costs, wear, or contractual frictions. The finite-horizon optimal switching problem is commonly characterized by a system of Hamilton--Jacobi--Bellman quasi-variational inequalities; see, for example, \cite{pham2009continuous}. Closely related work on stochastic control of electricity input under uncertain demand includes \cite{GKL19} and \cite{GKL23}, who study optimal control and chance-constrained formulations for supply systems with uncertain demand.

The existing literature establishes that nuclear flexibility can be technically feasible and economically valuable in systems with high renewable penetration, and that residual demand is a key variable for electricity-market outcomes. Less attention has been paid to the continuous-time operating problem of a nuclear producer that faces stochastic residual demand, costly regime changes, and different degrees of market access. The present paper addresses this gap by combining an economic model of residual-demand balancing with a stochastic optimal-switching formulation of nuclear load following. The framework links technology constraints to market incentives and produces policy-relevant comparative statics on shortage costs, curtailment costs, ramping capability, switching costs, and market integration.

The remainder of the paper is organized as follows. Section~\ref{sec:setting} introduces the model of nuclear production, residual demand, and operating costs. Section~\ref{sec:switching} formulates the optimal switching problem and derives the dynamic programming equation. Section~\ref{sec:numerics} presents the semi-Lagrangian numerical scheme. Section~\ref{sec:results} reports the numerical results for the closed- and open-economy cases. Section~\ref{sec:conclusion} concludes and discusses policy implications.

\section{Setting}\label{sec:setting}

We consider a regulated load-serving producer responsible for meeting domestic electricity demand over a finite horizon. Its generation portfolio comprises renewable technologies -- namely wind, solar photovoltaic, hydroelectric, and geothermal generation -- together with nuclear power plants (NPPs) and legacy fossil-fuel units. Since the model focuses on aggregate balancing rather than network constraints, production from each technology is represented at the system level and spatial heterogeneity is abstracted from.\\

Renewable generation has priority in the merit order and is dispatched first. Nuclear generation is used to meet residual demand whenever possible, while fossil-fuel generation represents a costly back-up option. The producer's short-run problem is therefore to choose the operating regime of the nuclear plant so as to balance stochastic residual demand while accounting for production limits, ramping constraints, and switching costs.\\

The ultimate objective of the energy company is to match the residual demand (net of renewables) through nuclear production in a way that is optimal according to some objective to be specified below. \\

The company is interpreted as a regulated load-serving entity. Its retail revenues are either fixed or predetermined by long-term contracts and therefore do not enter the short-run control problem. However, failing to meet residual demand generates a direct economic cost, because the company must either activate expensive backup generation, buy emergency electricity, or incur penalties associated with involuntary load shedding. A shortage penalty in the objective stems as a reduced-form representation of reliability costs. \\

We formalize the activity of the firm as the solution of an optimal switching problem over a finite horizon. We first assume that the economy is closed and the company has no interaction with the market. This means that the firm cannot purchase electricity if it fails to meet the internal demand or sell excess production.  Conversely, if the economy is open, then the company can trade electricity internationally. In this sense, it is either a price taker whose actions do not influence the market price of electricity, or it participates actively to the definition of a global equilibrium by means of its nuclear output. In practice, it is reasonable to assume that the role of the company depends on its size. \\

We begin by introducing the basic building blocks in our framework -- the nuclear output and the residual demand -- in terms of their instantaneous dynamics. For this, we fix a filtered probability space $(\Omega, \mathcal{F}, \mathbb{F} = (\mathcal{F}_t)_{t\in [0,T]},\mathbb{P})$.

\subsection{Nuclear production}

The agent controls the nuclear output. 
Production within the nuclear power plant depends on physical and regulatory considerations, but is otherwise deterministic. 
We assume the following dynamics specification:
\begin{equation}\label{eq:NP}
    \dd P_t = \mu(P_t,I_t) \dd t, \qquad \mu(p,i) =  \max(i,0) \mathbbm{1}_{ \{ p < P^{\max} \}} + \min(i,0)  \mathbbm{1}_{ \{ p > P^{\min} \} } %i \mathbbm{1}_{p \in [P_{\min},P_{\max}]} 
\end{equation}
where $i \in \mathbbm{I} = \{0, +r, -r\}$ identifies different production regimes. In particular,
\begin{itemize}
    \item the case $i=0$ corresponds to production at a constant rate;
    \item the case $i=+r$ corresponds to an increase in production at rate $r$, up to eventually reaching $P_{\max}$;
    \item the case $i=-r$ corresponds to a decrease in production at rate $r$, up to eventually reaching $P_{\min}$.
\end{itemize}
Such a rate $r>0$ defines the maneuverability of the NPP and is imposed on the producer by the relevant international agency. In practice, what the authority sets is actually the maximum allowable rate $\bar{r}$ so that the firm can choose any $r \in (0,\bar{r}]$ depending on the circumstances and adjust its value over time. We do not account for this flexibility and instead treat $r$ as a fixed constant for the sake of simplicity. \\

The drift $\mu$ in \eqref{eq:NP} ensures that $P_t \in [P^{\text{min}}, P^{\text{max}}]$ for every $t\geq 0$. In fact, it is economically inconvenient for the firm to shut down the NPP completely (or operate it at very low capacity), and an upper bound is set for security reasons to not be exceeded under standard operating programs.
\\

A production strategy within the NPP is determined by a double sequence $\alpha = (\tau_n, \iota_n)_{n\in \mathbb N}$, where
$(\tau_n)$ is an increasing sequence of stopping times representing the decision on "when to switch", and the $\iota_n$s are $\mathcal{F}_{\tau_n}$-measurable random variables taking values in $\mathbb{I}$, thus answering the question on "where to switch". In other terms, $\tau_n\in [0,T]$ is the (random) time when the decision to switch from regime $\iota_{n-1}$ to regime $\iota_n$ occurs. \\

We denote by $\mathcal{A}$ the set of switching controls. Then, given an initial regime $i \in \mathbb{I}$ and a control $\alpha = (\tau_n, \iota_n)_{n \in \mathbb{N}} \in \mathcal{A}$, the càd-làg process 
\begin{equation*}
I^i_t = \sum_{n\in \mathbb N} \iota_n {\mathbbm 1}_{[\tau_n, \tau_{n+1})}(t), \qquad t \geq 0, \quad I^i_{0^-} = i    
\end{equation*}
identifies the current regime at time $t$. Therefore, it is clear that $I^i_0 = \iota_1 \neq i$ is also a possibility as associated with a jump at time $\tau_0 = 0$.

\subsection{Residual demand}

In analogy with \cite{ACNHT:09}, we directly model the residual demand net of renewable energy sources: $Y_t = D_t - R_t$. For this we write
\begin{equation} \label{eqn:res_dem_OU}
\begin{aligned}
    & \dd Y_t = \kappa(\theta_t-Y_t)\dd t + \nu \dd W^Y_t \\
    & \theta_t = \beta + \sum_j (\zeta_j \cos(\omega_j t) + \eta_j \sin(\omega_j t) ) 
\end{aligned}    
\end{equation}
where $\beta$ is a baseline demand and the $\omega_j$s represent the fundamental frequencies that capture the daily, weekly, and yearly cycles. \\

The Ornstein--Uhlenbeck specification provides a parsimonious reduced-form model of residual demand. It captures mean reversion around deterministic seasonal components while allowing residual demand to take negative values, which may occur when renewable production exceeds total electricity consumption. The purpose of this specification is to obtain a tractable stochastic state variable for the optimal switching problem. We therefore restrict attention to a continuous diffusion framework, which is the setting used for the dynamic programming equation and the semi-Lagrangian numerical scheme below. See Appendix \ref{sec:estimation} for further discussions on the model.

\subsection{The optimal switching problem}\label{sec:switching}

We now formulate our optimal switching problem both in the closed economy and in an open economy.

\subsubsection{Closed economy}

A closed economy is one where the firm can only rely on its production tools in the attempt to satisfy the residual demand. Therefore, the only way to bridge a production deficit is to resort to fossil fuels. We strongly penalize this last circumstance in view of a green transition. On the other hand, there is no direct incentive for the company to operate the nuclear power plant at full capacity if not strictly required by the domestic customers. \\

In the course of its operations, the firm incurs the following social, environmental, and economic costs:
\begin{itemize}
    \item Excess production $\lambda_1 (P_t - Y_t)^+$. Because electricity cannot be stored, over-production is a cost.
    \item Production deficit $\lambda_2 (Y_t - P_t)^+$. Shortfalls in the nuclear output necessitate resorting to fossil fuels in order to avoid blackouts. 
    % The choice $\lambda_2 >> \lambda_1$ aims at avoiding that.
    The coefficient $\lambda_2$ should not be interpreted as an arbitrary preference parameter. It represents the marginal economic cost of a shortage. Depending on the institutional setting, it may include the marginal cost of fossil backup generation, imbalance penalties, lost retail margins, regulatory compensation payments, and, in the extreme case of involuntary curtailment, the value of lost load. Therefore the condition $\lambda_2 \gg \lambda_1$ reflects the empirical fact that shortage and reliability costs are typically much larger than curtailment or overproduction costs.
    \item Operating costs $\gamma_1 P_t$ include all the expenses the firm has to bear to keep the NPP running, ranging from component wear and scheduled maintenance to ensuring the continuity of everyday activities. It is therefore reasonable to model such costs as proportional to the production rate.
\item Cost of switching \(\sum_n c_{\iota_{n-1},\iota_n}\). 
Switching from regime \(i\) to regime \(j\in\mathbb I\) entails an instantaneous cost \(c_{i,j}\). We assume that
\begin{align*}
    & c_{i,j}\geq 0 \quad \text{for all} \ i \neq j \\
    & c_{i,i} = 0
\end{align*}
and the matrix % that the switching-cost matrix
\[
C =
\begin{pmatrix}
0 & c_{-r,0} & c_{-r,+r} \\
c_{0,-r} & 0 & c_{0,+r} \\
c_{+r,-r} & c_{+r,0} & 0
\end{pmatrix}%,
%\qquad
%\mathbb I=\{-r,0,+r\},
\]
satisfies the triangle inequality: % condition 
$$c_{i,k} < c_{i,j} + c_{j,k} \quad \text{ for } j \neq i,k.$$ 
In particular, this implies a no-free-loop condition
% Notice that this  implies the  no-free-loop condition:
\begin{equation}
\label{eq:noFreeLoop}
c_{i_0,i_1}+c_{i_1,i_2}+\cdots+c_{i_{n-1},i_0}>0
\end{equation}
% for every non-trivial cycle \(i_0,i_1,\ldots,i_{n-1},i_0\) in \(\mathbb I\) 
which rules out cost-free instantaneous cycles of switches and ensures that the switching problem is well posed. \\ Finally, switching costs are also assumed to be constant and independent of time and state.
\end{itemize}

We therefore set $X = (P, Y) \in Q:=[P_{\min},P_{\max}]\times \R$. Then, given an initial state-regime $(x,i) \in Q \times \mathbb{I}$ and a switching control $\alpha \in \mathcal{A}$, the controlled process $X^{x,i}$ is the solution to the stochastic differential equation below
\begin{equation}\label{eqn:X_dyn}
    dX_t = b(t,X_t,I^i_t) \dd t + \sigma \dd W_t,
\end{equation}
where 
\begin{equation*}
    b(t,(p,y),i) = 
    \begin{bmatrix}
         \max(i,0) \mathbbm{1}_{ \{ p < P^{\max} \}} + \min(i,0)  \mathbbm{1}_{ \{ p > P^{\min} \} } \\
        \kappa(\theta_t-y)
    \end{bmatrix},
    \qquad 
    \sigma =
    \begin{bmatrix}
        0 & 0 \\
        0 & \nu
    \end{bmatrix}
\end{equation*}
and  $W = [W^P,W^Y]$ is a two-dimensional Brownian motion. The covariance structure of $W$ is consistent with the fact that nuclear production is deterministic. \\

%Regimes are characterized by running costs $f : [0,T] \times \mathbb{R}^d \times \mathbb{I} \to \mathbb{R}$ and terminal costs $g : \mathbb{R}^d \times \mathbb{I} \to \mathbb{R}$. One usually writes $f^i_t = f(\cdot,\cdot,i)$ and $g^i = g(\cdot,i)$ to indicate the costs associated with the particular regime $i \in \mathbb{I}$. However, no explicit dependence on the regime - nor time (for $f$) - is in place in our case:

Running costs $f:Q\to\R$ are given by
\begin{align}
    & f(x) \equiv f((p,y)) = \lambda_1 (p - y)^+ + \lambda_2 (y - p)^+ + \gamma_1 p\label{eqn:running_cost}
\end{align} 
and terminal costs are such that $g(x) \equiv 0$.

% The expected total cost for the energy company when running the system described above starting at time $t \in [0,T]$ from point $(x,i)\in Q\times \mathbb I$, and enacting the control strategy $\alpha = (\tau_n, \iota_n)_{n \in \mathbb{N}}$, is stated in the following:
% \begin{equation*}
%     J^\alpha(t,x,i) = \mathbb{E} \left[ \int_t^T f(X_s^{x,i}) ds + \sum_{\tau_n \in [t,T)} c_{\iota_{n-1},\iota_n} \right].
% \end{equation*}

\subsubsection{Open economy}\label{sec:open}

In the previous setting, the unique drivers for agent decisions come from the need to satisfy the demand and avoid unnecessary overproduction. \\

We now allow the producer to trade electricity in an external market. Its ultimate objective remains to fulfill its "local" demand $\{Y_{t}\}_{t\geq 0}$, but it can now buy electricity internationally. Also, overproduction can now be sold. \\ 

Let $\{S^{\text{s}}_t\}_{t\geq 0}$ (resp. $\{S^{\text{b}}_t\}_{t\geq 0}$) be the process describing the market price for selling (resp. buying)  electricity. 
The company will incur in:
\begin{itemize}
    \item Gains from excess production $ S^{\text{s}}_t (P_t - Y_{t})^+$. Over-production is no longer wasted, but can be sold at price $S^{\text{s}}_t$.
    \item Costs of a production deficit $S^{\text{b}}_t (Y_{t} - P_t)^+$. The marginal demand after nuclear production can be covered by the excess production available from foreign operators at price $S^{\text{b}}_t$.
\end{itemize}
As usual, the company still faces operating costs $\gamma_1 P_t$ and switching costs $\sum_n c_{\iota_{n-1},\iota_n}$. \\

We consider three price scenarios, depending on the residual demand on the market $\{M_{t}\}_{t\geq 0}$.
Let $s_L, s_M, s_H \in \R$ be the price levels associated with different clearing resources. In particular, we assume that $0\leq s_L< s_M <s_H$ so as to reflect the marginal cost of renewable, nuclear, and fossil energy, respectively. We also denote $a>0$ the total nuclear capacity in the system. Once a constant bid-ask spread $\delta >0$ has been fixed, we have that
$$
S^{\text{s}}_t = \psi(M_{t}) := \begin{cases}
   s_{L} & \text{if}\quad M_{t}\leq 0\\
    s_{M} & \text{if}\quad M_{t}\in  (0, a] \\
     s_{H} & \text{if}\quad M_{t}> a,
\end{cases}
\qquad \text{and}\qquad S^{\text{b}}_t = S^{\text{s}}_t +\delta.
$$

We therefore set $X = (P, Y, M) \in Q:=[P_{\min},P_{\max}]\times \R\times \R$. Then, given an initial state-regime $(x,i) \in Q \times \mathbb{I}$ and a switching control $\alpha \in \mathcal{A}$, the controlled process $X^{x,i}$ is the solution to the stochastic differential equation below
\begin{equation}\label{eqn:X_dyn_open}
    dX_t = b(t,X_t,I^i_t) \dd t + \sigma \dd W_t
\end{equation}
where 
\begin{equation*}
    b(t,(p,y,m),i) = 
    \begin{bmatrix}
         \max(i,0) \mathbbm{1}_{ \{ p < P^{\max} \}} + \min(i,0)  \mathbbm{1}_{ \{ p > P^{\min} \}} \\
        \kappa_Y(\theta_{Y,t}-y)\\
        \kappa_M(\theta_{M,t}-m)
    \end{bmatrix},
    \qquad 
    \sigma =
    \begin{bmatrix}
        0 & 0 & 0\\
        0 & \nu_Y & 0\\
        0 & 0 & \nu_M
    \end{bmatrix}
\end{equation*}
and $W = [W^P, W^{Y}, W^{M}]$ is a three-dimensional Brownian motion. \\

\textbf{Price taker} \\

In the price-taker case, the cost of buying and selling electricity depends only on the market residual demand and not on the producer's own nuclear output. \\

The running cost $f:Q\to\R$ is given by
\begin{equation} \label{runcosts_pricetaker}
    f(x) \equiv f((p,y,m)) = (\psi(m) +\delta) (y - p)^+ - \psi(m) (p - y)^+  + \gamma_1 p  
\end{equation} 

\textbf{Price maker} \\

Our simple framework also accommodates the case where the firm concurs in defining an equilibrium price. For this, we write 
\begin{equation*}
    q_t := Y_t - P_t 
\end{equation*}
for the (im)balance of the firm, and define
\begin{equation*}
    Z_t : = M_t + q_t.
\end{equation*}
Then
\begin{equation*}
    S^s_t = \psi(Z_t) \qquad \text{and} \qquad S^b_t = S^s_t + \delta.
\end{equation*}
This gives
\begin{equation} \label{runcosts_pricemaker}
    f(x) \equiv f\left( (p, y, m) \right)=\left(\psi\left(m+y-p\right)+\delta\right)\left(y-p\right)^{+}-\psi\left(m+y-p\right)\left(p-y\right)^{+}+\gamma_1 p
\end{equation}
thus clarifying that the nuclear production of the firm now modifies the market price via a feedback on $Z_t$. \\

In both cases, we still have $g \equiv 0$. \\ 

\subsection{The Dynamic Programming equation}

 In both scenarios above, the expected total cost for the energy company when running the system starting at time $t \in [0,T]$ from point $(x,i)\in Q\times \mathbb I$, and enacting the control strategy $\alpha = (\tau_n, \iota_n)_{n \in \mathbb{N}}$, is stated in the following:
\begin{equation*}
    J^\alpha(t,x,i) = \mathbb{E} \left[ \int_t^T f(X_s^{x,i}) \dd s + \sum_{\tau_n \in [t,T)} c_{\iota_{n-1},\iota_n} \right].
\end{equation*}

Minimizing such an expectation over $\mathcal{A}$ defines the value functions  $v^i:[0,T] \times Q \to \R$, $i\in\mathbb I$,
\begin{equation}\label{eqn:value_functions}
    v^i(t,x) = \inf_{\alpha \in \mathcal{A}} J^\alpha(t,x,i), \qquad i \in \mathbb{I},
\end{equation}
for which we provide a PDE characterization via the dynamic programming principle (DPP), that now reads as
\begin{equation*}
    v^i(t,x) = \inf_{\alpha \in \mathcal{A}} \mathbb{E} \left[ \int_t^\vartheta f(X^{x,i}_s) \dd s + \sum_{\tau_n \in [t,\vartheta)} c_{\iota_{n-1},\iota_n} + v^{I^i_\vartheta}(\vartheta,X^{x,i}_\vartheta) \right], 
\end{equation*}
where $\vartheta \in (t,T)$ is any $\mathbb F$-stopping time. \\ 
By means of the DPP  value functions $(v_i)_{i\in \mathbb  I}$ in \eqref{eqn:value_functions} can be characterised  by the following system of $|\mathbb I|$ Hamilton-Jacobi-Bellman quasi variational inequalities (HJB-QVIs):
\begin{equation*}
\begin{aligned}
& \max \left\{ -\left( \partial_t v^i + \mathcal{L}^iv^i \right) (t,x) - f(x), v^i(t,x) - \min_{j\neq i} (v^j(t,x) + c_{i,j}) \right\} = 0, \qquad (t,x) \in [0,T)\times Q, i \in \mathbb{I}, 
%$\\
%& \text{s.t. } v^i(T,x) = 0
\end{aligned}
\end{equation*}
where
\begin{equation*}
    \mathcal{L}^i \varphi = b(\cdot,i) \nabla_x \varphi + \frac{1}{2} \text{Tr}[\sigma\sigma^\top\nabla_x^2 \varphi]
\end{equation*}
is the generator of $X$ in the regime $i$, coupled with the terminal conditions 
$$
v^i(T,x) = 0 \qquad x\in Q, i\in\mathbb{I}.
$$ 
The drift in \eqref{eq:NP} is discontinuous at the production bounds because of the indicator functions. This discontinuity is convenient for modelling the hard production constraints \(P^{\min}\) and \(P^{\max}\), but it falls outside the standard Lipschitz framework used in the Markovian theory of optimal switching. For the theoretical characterization, we therefore work with a smooth approximation of the production drift. For \(K>0\), we define
\[
\chi_K^{+}(p)
=
\frac{1}{1+\exp\{K(p-P^{\max})\}},
\qquad
\chi_K^{-}(p)
:=
\frac{1}{1+\exp\{-K(p-P^{\min})\}}
\]
and replace \(\mu\) by
\[
\mu_K(p,i)
:=
\max(i,0)\chi_K^{+}(p)
+
\min(i,0)\chi_K^{-}(p).
\]
For every fixed \(K\), the map \(\mu_K(\cdot,i)\) is bounded and Lipschitz continuous for each \(i\in\mathbb I\). Moreover, \(\mu_K\) converges pointwise to the original drift away from the production bounds as \(K\to\infty\). Hence the regularized state dynamics obtained replacing $\mu$ with $\mu_K$ satisfy the standard regularity assumptions needed for existence and uniqueness of strong solution to equation \eqref{eq:NP}. \\
Moreover, in the open-economy case, the price function \(\psi\) is discontinuous at the thresholds
\(0\) and \(a\). To apply the standard dynamic programming and viscosity-solution results under
continuous data, we replace \(\psi\) by a Lipschitz continuous approximation. Let
\[
H_K(m):=\frac{1}{1+\exp(-Km)}, \qquad K>0,
\]
and define
\[
\psi_K(m)
:=
s_L
+
(s_M-s_L)H_K(m)
+
(s_H-s_M)H_K(m-a).
\]
For every fixed \(K<\infty\), the function \(\psi_K\) is bounded and Lipschitz continuous. Moreover,
\(\psi_K(m)\to \psi(m)\) as \(K\to\infty\) for every \(m\notin\{0,a\}\). In the open-economy problem,
the theoretical characterization is therefore stated for the regularized running cost obtained by
replacing \(\psi\) with \(\psi_K\).
\\

The results in the sequel rigorously hold for the value functions of the regularised problem, which clearly depend on the regularisation parameter $K>0$. However, to simplify notation, we leave the dependence on \(K\) implicit.

\begin{theorem}\label{thm:dyn_prog_eqn}
The value functions $\{v^i, i\in \mathbb I\}$ of the regularized problem are  the unique continuous viscosity solutions to the following  system of variational inequalities
\begin{subequations}
\label{eqn:HJB_BC}
\begin{align}
 \max \left\{ -\left( \partial_t v^i + \mathcal{L}^i v^i \right) - f(x), v^i - \min_{  j \neq i} (v^j + c_{i,j}) \right\} = 0, &\qquad \text{on } [0,T)\times Q, i \in \mathbb{I}
 \label{eqn:HJB} \\
 v^i = 0 &\qquad\text{on } \{T\}\times Q, i \in \mathbb{I}.
 \label{boundary_conditions}
\end{align}
\end{subequations}
\end{theorem}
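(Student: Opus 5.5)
The plan is to reduce the statement to the standard Markovian theory of finite-horizon optimal switching, as in \cite{pham2009continuous}, after checking that the regularized data satisfy its hypotheses. Those hypotheses are: Lipschitz drift and diffusion coefficients, continuous running cost with at most linear (polynomial) growth, constant nonnegative switching costs satisfying the no-free-loop condition \eqref{eq:noFreeLoop}, and a continuous terminal cost.

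First I will check these hypotheses. After replacing $\mu$ by $\mu_K$, the drift $b(t,x,i)$ is bounded and Lipschitz in $p$. It is affine in $y$ (and in $m$), with continuous bounded time dependence through $\theta_t$, and $\sigma$ is constant. Hence \eqref{eqn:X_dyn} and \eqref{eqn:X_dyn_open} have unique strong solutions with $\mathbb E\sup_{s}|X_s|^2\le C(1+|x|^2)$ and the usual Lipschitz estimate $\mathbb E\sup_s|X^{t,x}_s-X^{t,x'}_s|^2\le C|x-x'|^2$. The running costs \eqref{eqn:running_cost}, \eqref{runcosts_pricetaker} and \eqref{runcosts_pricemaker}, with $\psi_K$ in place of $\psi$, are Lipschitz. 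In the price-maker case this uses that the product of the bounded Lipschitz $\psi_K$ with $(y-p)^\pm$ is only locally Lipschitz but has linear growth, which suffices for continuity and growth bounds.

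A subtle point is that with $\mu_K$ the state $P$ may leave $[P_{\min},P_{\max}]$. I will handle this by either working on $\R\times\R$ (or $\R^3$) and restricting to $Q$, or by noting that the relevant growth is harmless. I will state this choice explicitly.

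With these checks in place, the argument runs in the following order.
\begin{enumerate}
\item Show that $v^i$ is finite with linear growth. The upper bound comes from the no-switch strategy; the lower bound comes from $c_{i,j}\ge0$ and the growth of $f$.
\item Show that $v^i$ is continuous. It is uniformly continuous in $x$ by the flow estimate. In $t$ it is $1/2$-Hölder, which I obtain from the DPP with deterministic $\vartheta=t+h$. The no-free-loop condition lets me restrict to strategies with finitely many switches a.s. and with bounded expected number of switches.
\item Take the DPP as stated above. It yields the viscosity supersolution and subsolution properties of \eqref{eqn:HJB} by the usual argument. For the obstacle part, switching immediately gives $v^i\le\min_{j\ne i}(v^j+c_{i,j})$. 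On the region where this inequality is strict, I use a short-time no-switch argument with a smooth test function and Itô's formula to get the PDE inequality. The terminal condition \eqref{boundary_conditions} follows from continuity at $T$ with $g\equiv0$.
\end{enumerate}

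Uniqueness will come from a comparison principle for continuous sub/supersolutions with linear growth, and this is where I expect the main obstacle. I will follow the standard route. First, perturb a supersolution $w$ to a strict supersolution $w^\varepsilon$. The no-free-loop condition \eqref{eq:noFreeLoop} together with the triangle inequality makes this possible: add $\varepsilon$ times a suitable function of the regime and a term such as $\varepsilon e^{\lambda(T-t)}(1+|x|^2)$ to control growth at infinity. Second, argue by contradiction at a maximizer of $\max_i(u^i-w^{\varepsilon,i})$. The key step is to select, at the maximum point, a regime $i$ where the obstacle constraint is not active for $u$. If $u^i-\min_j(u^j+c_{i,j})=0$ held for every $i$ along the maximizing indices, the strict triangle inequality would produce a cycle with nonpositive total cost, contradicting \eqref{eq:noFreeLoop}. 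Third, in that regime, use doubling of variables with the Crandall--Ishii lemma. The diffusion is degenerate (constant, with zero in the $P$ direction), but this causes no trouble because $\sigma$ is constant and the drift is Lipschitz. Finally, let $\varepsilon\to0$.
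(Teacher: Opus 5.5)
Your overall route matches the paper's. The appendix also reduces the theorem to standard finite-horizon switching theory: Lipschitz regularized coefficients, the DPP, the no-switch and do-switch inequalities, and a comparison principle quoted from \cite{pham2009continuous}, \cite{DjehicheHamadenePopier2009}, \cite{ElAsri2016}. Your existence and continuity steps are a more careful version of that sketch.

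\textbf{The gap.} The comparison step, which you identify as the main obstacle, has the roles of sub- and supersolution reversed. With the convention $\max\{\cdot\}=0$, a subsolution $u$ satisfies \emph{both} $-(\partial_t+\mathcal L^i)u^i-f\le 0$ and $u^i\le\min_{j\ne i}(u^j+c_{i,j})$, in every regime.
\begin{itemize}
\item Finding a regime where $u$'s obstacle is inactive therefore gains you nothing.
\item Doubling of variables needs the PDE inequality for the \emph{supersolution}. A supersolution only satisfies $\max\{\cdot\}\ge 0$, so $w^{\varepsilon,i}$ obeys the PDE inequality only where $w^{\varepsilon,i}<\min_{j\ne i}(w^{\varepsilon,j}+c_{i,j})$.
\item Your chain does not stay inside the argmax. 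From $u^i=u^j+c_{i,j}$ at a maximizer you cannot conclude that $j$ also maximizes $u-w^{\varepsilon}$. That would require $w^{\varepsilon,i}-w^{\varepsilon,j}\ge c_{i,j}$, about which you know nothing.
\item The perturbation you propose does not do what you claim. Adding $\varepsilon h_i$ to $w^i$ cannot make the supersolution's obstacle strict: where $w^i=w^j+c_{i,j}$ the margin is $\varepsilon(h_i-h_j)$, and this cannot be positive for every ordered pair.
\end{itemize}
What you wrote is the correct argument for the maximization convention $\min\{\cdot\}=0$, where the supersolution satisfies both inequalities. It has been transplanted here without swapping the roles.

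\textbf{How to fix it.} There are two standard repairs.
\begin{itemize}
\item \emph{Chain on $w$.} Take a maximizer $(t_0,x_0,i)$ of $u^i-w^{\varepsilon,i}$ with value $M>0$. Suppose $w^{\varepsilon,i}\ge w^{\varepsilon,j}+c_{i,j}$ for some $j\ne i$. Combined with $u^i\le u^j+c_{i,j}$, this gives $u^j-w^{\varepsilon,j}\ge M$, so $j$ is also a maximizer. Iterating must close a cycle, yielding $w^{\varepsilon,i_0}\ge w^{\varepsilon,i_0}+\sum c$, which contradicts \eqref{eq:noFreeLoop}. Hence some maximizing regime has $w^{\varepsilon}$'s obstacle strictly inactive at $(t_0,x_0)$, and by continuity also nearby. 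Make $(t_0,x_0)$ a strict maximum and run Crandall--Ishii in that regime.
\item \emph{Perturb the subsolution.} Use \eqref{eq:noFreeLoop} to obtain potentials with $h_i-h_j<c_{i,j}$ for $i\ne j$. Replace $u$ by $(1-\varepsilon)u+\varepsilon\bar u$, where $\bar u^i=h_i-Ce^{\lambda(T-t)}(1+|x|^2)$ with $C$ and $\lambda$ large. This subsolution has a strict obstacle, so $w$'s obstacle cannot be active at the maximizer and no chain is needed.
\end{itemize}
In either version, only a regime-independent growth term belongs on $w$.

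\textbf{A smaller point on the state space.} You correctly note that $P$ can leave $Q$ under the regularized dynamics. Indeed $\mu_K(P^{\max},+r)=r/2>0$ and $\mu_K(P^{\min},-r)=-r/2<0$, so the drift points out of $Q$. Of your two options, only working on $\R^2$ (resp.\ $\R^3$) is sound, and it yields uniqueness on $[0,T)\times\R^2$ (resp.\ $\R^3$), not on $Q$. On $Q$ alone the QVI lacks boundary data at $p=P^{\max}$ in regime $+r$ and at $p=P^{\min}$ in regime $-r$. The paper's sketch passes over this point as well.
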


\begin{proof} 
See Appendix \ref{sec:proofs} for a sketch.
\end{proof}

\section{Semi-Lagrangian scheme}\label{sec:numerics}
In what follows we consider a generic state dimension $d$. We will have $d=2$ in the context of a closed economy and $d=3$ in the open economy case. Also, we will work in a general framework where $f$ and $g$ allowed to depend on the regime, writing $f^i(\cdot):=f(\cdot, i)$ and $g^i(\cdot):=g(\cdot, i)$, and the volatility $\sigma: [0,T]\times \R^d\times \mathbb I\to \R^{d\times d}$ is not necesserely constant. 
Let $\mathcal{T} = \{ t_n = n\Delta t, \ n = 0,\dots,N \}$ define an even partition of the interval $[0,T]$ with mesh size $\Delta t = \frac{T}{N}$ and $\mathcal G = \{x_k, k\in \mathbb K\}$ a grid on the state space $Q$ with mesh size $|\Delta x|$. Fix $n\in \{0,\ldots, N-1\}$, the regime $i \in \mathbb{I}$ and the  state $x\in \mathcal G$ at time $t_n$.
We assume that over a time interval $[t_n,t_{n+1}]$,  one only has two possible actions:
\begin{enumerate}
    \item[C.] Continue with regime $i$ until $t_{n+1}$: this implies to pay the running costs associated with regime $i$ for the entire interval $[t_n,t_{n+1}]$, and keep the option to switch later. This is worth 
    \begin{equation*}
       C^i_n(x):= \mathbb{E} \left[ \int_{t_n}^{t_{n+1}} f_s^i(X^{(i)}_s) ds + v^i(t_{n+1},X^{(i)}_{t_{n+1}}) \middle| X_{t_n} = x\right],
    \end{equation*}
    where notation $(X_t^{(i)})_{t\in [t_n,T]}$ indicates that the process $X$ has evolved according to the dynamics implied by regime $i$.
    \item[S.] Switch immediately to some $j \neq i$: this implies to pay the switching cost $c_{i,j}$ at time $t_n$ and enjoy the value $v^j(t_n, x)$ from that point on. Optimising among all regimes $j \in \mathbb I\setminus\{i\}$ is worth the following: 
    \begin{equation*}
        S^i_n(x) :=\min_{j \neq i} (v^j(t_n,x)+c_{i,j}).
    \end{equation*}
\end{enumerate}
Under this assumption, $v(t_n, x) = \min \left\{ 
 C^i_n(x),
S^i_n(x)
\right\}$.
\\

We are now going to define an easily computable approximation of the continuation value $C^i_n(x)$. First, observe that the differential dynamics in \eqref{eqn:X_dyn} can be approximated by means of the Euler-Maruyama scheme. In particular, given $\varepsilon \sim \mathcal{N}(0,I)$,  $X_{t_{n+1}}^{(i)}$ is approximated by
\begin{equation*}
X_{n+1} = x + b(t_n,x,i) \Delta t + \sigma(t_n,x,i) \sqrt{\Delta t} \ \varepsilon.    
\end{equation*}
Given a smooth test function $\varphi$, a Taylor expansion around $x$ gives 
\begin{equation*}
    \mathbb{E} [\varphi(X_{n+1})] = \varphi(x) +  \mathcal{L}^i \varphi(x) \Delta t + o(\Delta t), 
\end{equation*}
thus crucially implying that only the first two moments of the innovation (i.e. mean and variance) matter at order one in $\Delta t$.
 More specifically, if  $h := b(t_n, x, i) \Delta t + \sigma(t_n, x, i) \sqrt{\Delta t} \ \varepsilon$ one has 
\begin{align*}
\mathbb{E}[h] = b(t_n,x,i) \Delta t, 
    \qquad  \mathbb{E}[h^{\otimes 2}] = (\sigma\sigma^\top) (t_n,x,i) \Delta t  + o(\Delta t)
    \quad\text{and}\quad \mathbb{E}[h^{\otimes k}] = o(\Delta t) \quad (k\geq 3).
\end{align*}
We consequently replace $\varepsilon$ with $\xi \sim \text{Rad}^d:= \{-1,1\}^d$ and introduce the auxiliary variable
\begin{equation}\label{eqn:rv2footpoints}
    \tilde X_{n+1} =  x + b(t_n,x,i) \Delta t + \sigma(t_n,x,i) \sqrt{\Delta t} \ \xi
\end{equation}
as driven by the simplest possible engine so that $\mathbb{E}[\varepsilon] = \mathbb{E}[\xi]$ and $\mathbb{E}[\varepsilon^{\otimes 2}] = \mathbb{E}[\xi^{\otimes 2}]$. \\
By construction, we therefore have
\begin{equation*}
    \mathbb{E} [\varphi(\tilde X_{n+1})] = \varphi(x) +  \mathcal{L}^i \varphi(x)\Delta t + o(\Delta t). 
\end{equation*}
A more convenient expression for the latter is now given as a finite sum of evaluations of function $\varphi$ at footpoints, i.e.

\begin{equation*}
    \mathbb{E} [\varphi(\tilde X_{n+1})] = \frac{1}{2^d} \sum_{s \in \{-1,+1\}^d} \varphi(p_s(x_k))
\end{equation*}
with
\begin{equation*}
    p_{s_1,\dots,s_d} = x + b(t_n,x,i) \Delta t + \left[ \sum_{k=1}^d (s_1 \sigma_{k1} + \dots + s_d \sigma_{kd})(t_n,x,i) e_k \right] \sqrt{\Delta t}
\end{equation*}
for full generality.

Then,  $v(t_n, x)$ can be replaced by the following approximation of order $\Delta t$ in time,
\begin{equation}\label{eq:approx}
v^i_n(x) = 
\min \left\{ 
\tilde C^i_n(x),
\tilde S^i_n(x)
\right\}.
\end{equation} 
where 
\begin{align*}
    \tilde C^i_n(x) & := f^i(t_n,x) \Delta t + \frac{1}{2^d} \sum_{l=1}^{2^d}  \mathfrak{I}[v^i_{n+1}](p_l(x))\\
    \tilde S^i_n(x) & := \min_{j \neq i} (v^j_n(x)+c_{i,j})
\end{align*}
where $\mathfrak{I}[\cdot]$ is any $d$-dimensional monotone interpolator (i.e. non-negative weights). The operator $\mathfrak{I}[\cdot]$ is needed to provide approximated values of $v^i_{n+1}$ at footpoints $(p_l(x))$ that are possibly not on $\mathcal G$ even if $x$ is, while the monotonicity requirement is related to theoretical property of the algorithm ensuring convergence. Since monotone interpolator are only of first order, the expected accuracy in space of the present approximation is $|\Delta x|$.

% \textbf{The continuation region} \\

% % Let $\varphi(x) = v^i(t_k+\Delta,x)$. 
% % We eventually conclude with the following approximation of the continuation value at order one in $\Delta$
% % \begin{equation*}
% %     \mathbb{E} \left[
% % \int_{t_k}^{t_{k+1}} f_s^i(X^{(i)}_s) ds + v^i(t_k+\Delta,X^{(i)}_{t_k+\Delta}) \middle| X_{t_k} = x_k \right] = f_{t_k}^i(x_k) \Delta + \frac{1}{2^d} \sum_{l=1}^{2^d} \mathfrak{I} \left[ v^i \right] (t_k+\Delta,p_l(x_k)),
% % \end{equation*}
% %  \\

%\textbf{The switching iteration} \\

The algorithm we present exploits \eqref{eq:approx} to compute  backward from terminal time $T$ to $t_0 = 0$ an approximation of the value function. Each step in time requires to solve a switching problem. 
Let $n\in \{0,\ldots N-1\}$, $k\in \mathbb K$, $v= [v^1_n \dots v^{|\mathbb I|}_n]^\top (x_k)$ and $\Gamma: \mathbb{R}^{|\mathbb I|} \to \mathbb{R}^{|\mathbb I|}$ be defined as 
\begin{equation}\label{eqn:fixed_point_map}
(\Gamma(v))_i := \min \left\{ \tilde C^i_n(x_k) , \tilde S^i_n(x_k) \right\}, \qquad i \in \mathbb{I}. 
\end{equation}
Notice that the dependence on $v$ of the operator $\Gamma$ comes from the term  $\tilde S^i_n(x_k)$. Equation \eqref{eq:approx} then defines the fixed point problem 
$$
v = \Gamma(v),
$$
which clearly calls for a fixed-point iteration
\begin{equation}\label{eqn:fixed_point_iter}
    \begin{aligned}
        & v^{(0)} = (\tilde C^1_n(x_k),\ldots,\tilde C^{|\mathbb I|}_n(x_k))^\top \\
        & v^{(q+1)} = \Gamma(v^{(q)}), \qquad \text{ for } q = 0,1,2, \dots
    \end{aligned}
\end{equation}
Observe that the fixed point concerns the switching part $\tilde S^i$ only; in fact, $v^i_{n+1}$ defining the continuation value $\tilde C^i_n$ is already known at $t_n$ when marching backward in time over $\mathcal{T}$.

% where
% \begin{align*}
%     C^i & = f_{t_k}^i(x_k) \Delta + \frac{1}{2^d} \sum_{l=1}^{2^d} \mathfrak{I} \left[ v^i \right](t_k+\Delta,p_l(x_k)) \\
%     S^i & = \min_{j \neq i} (v^j(t_k,x_k)+c_{i,j}).
% \end{align*}
% Now, 

\begin{proposition} The following properties hold:
\begin{itemize}
        \item[(i)] $\Gamma$ is monotone, i.e. if $v \leq w$, then $\Gamma(v) \leq \Gamma(w)$ (where inequalities are meant to hold componentwise);
        \item[(ii)] $\Gamma$ is $1$-Lipschitz continuous, i.e.
        \begin{equation*}
            \|\Gamma(v)-\Gamma(w)\|_\infty \leq \|v-w\|_\infty.
        \end{equation*}
       \item[(iii)] The iteration \eqref{eqn:fixed_point_iter} converges in one step to the fixed point 
       $$
       \hat v_i = \min_{j=1,\dots,|\mathbb I|} (\tilde C^j_n(x_k) + c_{i,j})
       $$
    \end{itemize}   
\end{proposition}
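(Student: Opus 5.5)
The plan is to write $\Gamma$ explicitly with the continuation values frozen. Set $C_i:=\tilde C^i_n(x_k)$, which does not depend on $v$ because $v^i_{n+1}$ is already known. Then
\[
(\Gamma(v))_i=\min\Big\{C_i,\ \min_{j\neq i}(v_j+c_{i,j})\Big\},\qquad i\in\mathbb I ,
\]
and all three properties reduce to elementary facts about minima of affine maps with unit slope.

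For (i), each argument of the outer minimum is nondecreasing in every component $v_j$. The constant $C_i$ is trivially so, and $v_j+c_{i,j}$ is increasing in $v_j$. A minimum of nondecreasing functions is nondecreasing, which gives monotonicity.

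For (ii), I will use the inequality $|\min_l a_l-\min_l b_l|\le \max_l|a_l-b_l|$. Apply it componentwise to the families $\{C_i\}\cup\{v_j+c_{i,j}\}_{j\neq i}$ and $\{C_i\}\cup\{w_j+c_{i,j}\}_{j\neq i}$. The $C_i$ entries coincide, and the switching costs cancel in each difference. This yields $|(\Gamma v)_i-(\Gamma w)_i|\le\max_{j\neq i}|v_j-w_j|\le\|v-w\|_\infty$.

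For (iii), I compute $v^{(1)}=\Gamma(v^{(0)})$ with $v^{(0)}=C$. Since $c_{i,i}=0$, we get $v^{(1)}_i=\min\{C_i,\min_{j\ne i}(C_j+c_{i,j})\}=\min_j(C_j+c_{i,j})=\hat v_i$. It remains to check that $\Gamma(\hat v)=\hat v$, so that the iteration is stationary from step one. Expanding gives
\[
(\Gamma\hat v)_i=\min\Big\{C_i,\ \min_{j\ne i}\min_k (C_k+c_{j,k}+c_{i,j})\Big\}.
\]
The inequality $\le\hat v_i$ follows by taking $k=j$, which recovers every term $C_j+c_{i,j}$ together with $C_i$.

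For $\ge$, the main step is to show $c_{i,j}+c_{j,k}\ge c_{i,k}$ for all $j\ne i$ and every $k$. I split into three cases. If $k\notin\{i,j\}$, this is the assumed triangle inequality. If $k=j$, it holds because $c_{j,j}=0$. If $k=i$, it reads $c_{i,j}+c_{j,i}\ge 0=c_{i,i}$, which holds by nonnegativity of the costs (and strictly by the no-free-loop condition \eqref{eq:noFreeLoop}). In all cases every term satisfies $C_k+c_{j,k}+c_{i,j}\ge C_k+c_{i,k}\ge\hat v_i$, hence $\Gamma\hat v=\hat v$.

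Finally, I would add a short remark that $\hat v$ is the relevant, indeed unique, fixed point. Any fixed point $v$ has $v_i$ equal either to $C_i$ or to $v_{j}+c_{i,j}$ for some $j\neq i$. Following this chain, the no-free-loop condition \eqref{eq:noFreeLoop} prevents it from cycling at zero cost, so it must end at some $C_k$. The triangle inequality then collapses the chain to $v_i\ge C_k+c_{i,k}\ge\hat v_i$. The reverse inequality comes from monotonicity (i) together with $v\le\Gamma$-images bounded by $C$, or directly because $v_i\le C_k+c_{i,k}$ for all $k$ after one substitution.

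The only genuinely delicate point is this case analysis of the triangle inequality, in particular the degenerate indices $k=i$ and $k=j$, which the stated hypothesis (imposed only for $j\neq i,k$) does not cover directly.
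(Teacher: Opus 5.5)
Your proof is correct and follows the paper's argument. Parts (i) and (ii) use the same elementary comparisons of minima, and your check that $v^{(1)}=\hat v$ and $\Gamma(\hat v)=\hat v$ is exactly the paper's induction step, which exchanges the minima and uses the triangle inequality, $c_{l,l}=0$, and a separate treatment of the index $l=i$. Two minor differences: you handle $k=i$ with only $c_{i,j}+c_{j,i}\ge 0$, where the paper uses the strict version, and your closing uniqueness argument via the no-free-loop condition goes slightly beyond what the paper proves.
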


\begin{proof}
$(i)$ is a direct consequence of the definition of $\Gamma$. For any $n\in \{0,\ldots, N-1\}$, $k\in \mathbb K$, $i\in \mathbb I$, let us denote $\tilde C^i=\tilde C^i_n(x_k)$. One has,
\begin{align*}
\left|\Gamma(v)_i - \Gamma(w)_i\right|\leq \left|\min\left\{\tilde C^i, \min_{j \neq i} (v^j+c_{i,j})\right\} - \min \left\{\tilde C^i, \min_{j \neq i} (w^j+c_{i,j})\right\}\right| \leq \max_{j\neq i} |v^j - w^j|
\end{align*}
from which $(ii)$ immediately follows.
Let us now prove $(iii)$. We first provide the following explicit representation of the iterate $v^{(q)}$,
\begin{equation}\label{eq:explicit}
        v_i^{(q)} = \min_{j=1,\dots,|\mathbb I|} (\tilde C^j + c_{i,j}), \qquad q \geq 1, i\in \mathbb I.
\end{equation}
We proceed by induction on $q$. For $q = 1$, one has 
        \begin{equation*}
            v_i^{(1)} = \Gamma(v^{(0)})_i = \min(\tilde C^i, \min_{j\neq i} (v^{(0)}_j + c_{i,j})) = \min(\tilde C^i, \min_{j\neq i} (\tilde C^j + c_{i,j}))=\min_{j}(\tilde C^j + c_{i,j})
 \end{equation*}
 Moreover, assuming \eqref{eq:explicit} holds for  some $q\geq 1$, it follows
 \begin{align*}
 v_i^{(q+1)} &  = \Gamma(v^{(q)})_i = \min \left( \tilde C^i, \min_{j \neq i} (v_j^{(q)} + c_{i,j}) \right) 
         = \min \left\{ \tilde C^i, \min_{j \neq i}\left[ \min_l \left(\tilde C^l + c_{j,l}\right) + c_{i,j}\right] \right\} \\
            & = \min \left\{ \tilde C^i, \min_l\left[ \tilde C^l + \min_{j \neq i} \left( c_{j,l} + c_{i,j}\right) \right]\right\}= \min \left\{ \tilde C^i, \min_{l\neq i}\left[ \tilde C^l + \min_{j \neq i} \left( c_{j,l} + c_{i,j}\right) \right]\right\},
    \end{align*}         
 where the last equality is obtained observing that 
 $
 \tilde C^i < \tilde C^i + \min_{j \neq i} \left( c_{j,i} + c_{i,j}\right)
 $. The result then follows since, if $l\neq i$,  $\min_{j \neq i} \left( c_{j,l} + c_{i,j}\right) = c_{l,l} + c_{i,l} = c_{i,l}.$
Since the right hand side of \eqref{eq:explicit} does not depend on $q$ it is clear that it represents the fixed point, that is then reached in one iteration.          
            
\end{proof}
% \begin{remark}
%     Clearly the closed-form formula \eqref{eq:explicit} can  replace the switching iteration for fast computation when $|\mathbb I|$ is large.    
% \end{remark}

\textbf{Algorithm} \\
\begin{algorithm}[H]
\caption{Semi-Lagrangian scheme for finite horizon optimal switching problems}
\begin{algorithmic}[1]
\State Define $d$-dim lattice $\mathcal{G}$ for the space variables $X=(x_1,\dots,x_{\mathbb K})$
\State Initialize value table
\Statex \quad $v$ = zeros($N+1$,$|\mathbb K|$,$|\mathbb I|$) 
\State Apply terminal conditions
\Statex \algorithmicfor\ $i = 1:|\mathbb{I}|$ \algorithmicdo
    \Statex \quad $v$($N+1$,\ :\ ,$i$) = $g^i(:)$ 
\Statex \algorithmicend\ \algorithmicfor
\State Backward recursion
\Statex \algorithmicfor\ $n = N:-1:1$ \algorithmicdo
    \Statex \quad \% Build continuation candidates 
    \Statex \quad $C$ = zeros($|\mathbb K|$,$|\mathbb I|$)
    \Statex \quad \algorithmicfor\ $i = 1:|\mathbb I|$ \algorithmicdo
    \Statex \quad \quad $C$(\ :\ ,$i$) = $f^i(t_n,\ :\ ) \Delta t + \frac{1}{2^d} \sum_{l=1}^{2^d} \mathfrak{I} \left[ v(n+1,\ :\ , i )\right](p_l(\ :\ ) )$
    \Statex \quad \algorithmicend\ \algorithmicfor
    \Statex \quad \% Solve switching iteration
   % \Statex \quad $U$ = $C$
   % \Statex \quad \algorithmicfor\ $q = 1:|\mathbb I|-1$ \algorithmicdo
   % \Statex \quad \quad $U$ = $\Gamma(U)$
   % \Statex \quad \algorithmicend\ \algorithmicfor
    \Statex \quad $v(n,:,:)$ = $\Gamma(C)$
\Statex \algorithmicend\ \algorithmicfor
\State (optional) Extract policy table
\end{algorithmic}
\end{algorithm}

\section{Numerical experiments}\label{sec:results}

This section illustrates how the optimal load-following policy depends on residual-demand uncertainty, shortage costs, switching costs, ramping capability, and market access. Matlab codes for all numerical experiments are available at \url{https://github.com/fabioBaschetti/OSNPP}. The Italian data are used to calibrate a representative high-frequency residual-demand process. The nuclear operating parameters are normalized and stylized; they are not intended to describe the current Italian generation mix.
Residual demand is constructed as total load net of renewable production and normalized by a reference capacity level, so that the state variables in the numerical model are expressed in dimensionless units.\\

We fix $P^{\text{min}} = 0.2$ and $P^{\text{max}} = 0.9$ for the operations of the nuclear power plant. Estimated parameters will always be denoted by hats, while barred quantities denote reference values used in the comparative statics.\\

We consider an Ornstein-Uhlenbeck process for the residual demand as described in Equation \eqref{eqn:res_dem_OU}, where the time-varying mean-reversion level $\theta_t$ includes nine seasonal components with periods $(\frac{1}{4},\frac{1}{3},\frac{1}{2},1)$ days, $(\frac{1}{2},1)$ weeks, and $(\frac{1}{4},\frac{1}{2},1)$ years. 
Model parameters are estimated using the least-squares procedure described in Appendix~\ref{sec:estimation}:
\begin{align*}
    & \hat{\kappa} = 0.3500 \qquad \hat{\beta} = 0.6118 \qquad \hat{\nu} = 0.1114 \\
    & \hat{\zeta} = 
    \begin{bmatrix}
        \hspace{\widthof{$-$}}0.4100 & \hspace{\widthof{$-$}}0.1606 & -2.4238 & -1.5101 & \hspace{\widthof{$-$}}0.0841 & \hspace{\widthof{$-$}}0.2984 & -0.0113 & \hspace{\widthof{$-$}}0.0563 & \hspace{\widthof{$-$}}0.0912
    \end{bmatrix} \\
    & \hat{\eta} = 
    \begin{bmatrix}
         \hspace{\widthof{$-$}}0.2714 & -0.6401 & \hspace{\widthof{$-$}}2.8156 & -0.9522 & -0.2479 & \hspace{\widthof{$-$}}0.0982 & -0.0162 & \hspace{\widthof{$-$}}0.0451 & -0.0527
    \end{bmatrix}   
\end{align*}
High-frequency (15-minute) data (for both the total load and the renewable production) ranging from 01/01/2024 to 31/12/2025 are publicly available for the Italian market at Terna's Download Center\footnote{\url{https://dati.terna.it/download-center}}. \\

Moreover, we assume a 5\% ramping rate per unit time (i.e., $\bar r  = 0.05 / \Delta t$) and introduce the following switching costs:
\begin{align*}
    \bar C
    = 
    \begin{bmatrix}
        0 & 4 & 7 \\
        1.6 & 0 & 4.8 \\
        1.6 & 0.4 & 0
    \end{bmatrix}
    10^{-4}.
\end{align*}
Observe that switching to a higher production regime is more expensive than switching to a lower state. However, we want to discourage excessively low production as a way to amortize the financial costs of the nuclear power plant (e.g. by $c_{+r, -r} > c_{+r, 0}$). \\

We extend our optimal switching problem up to 1 year, but mainly plot short horizons (e.g. 1 week or 1 month) for visualization purposes. 

\subsection{Closed economy}

As for the closed economy case, we set
\begin{align*}
 \gamma_1 = 0.24 \qquad 
 \bar \lambda_{1} = 0 \qquad
 \bar \lambda_{2} = 0.48 
\end{align*}
and observe the following:
\begin{remark}
One can always rescale the coefficients in order to assume no  operating costs other than the switching decisions, i.e. taking $\gamma_1=0$. This can be done without loss of generality. Indeed, for any $\gamma_1\geq 0$ one has  
\begin{align*}
f((p,y)) = \lambda_1 (p - y)^+ + \lambda_2 (y - p)^+ + \gamma_1 (p-y) + \gamma_1 y =  (\lambda_1+\gamma_1) (p - y)^+ + (\lambda_2-\gamma_1) (y - p)^+ + \gamma_1 y
\end{align*}
and, being the term $\gamma_1 y$ uncontrolled, the optimal strategy can be directly computed considering no operating cost and shifted parameters $(\lambda_1+\gamma_1)$ and $(\lambda_2 - \gamma_1)$.
\end{remark}
Because of this flexibility on $\gamma_1$, there is no need for us to study the dependence of the control on its actual value (this is why it comes with no bar). \\

We solve for the value functions using the semi-Lagrangian scheme described above and extract the
associated optimal policies. For each time \(t\in[0,T]\), regime \(i\in\mathbb I\), and state
\(x=(p,y)\), the policy table identifies the action that minimizes the continuation value conditional
on the current state.  \\

The following Figure \ref{fig:policy_table} provides an explicit example for the particular case where $t=0$ and $i_{0^-} = 0$.
\begin{figure}[H]
    \centering
    \includegraphics[width=0.6\linewidth]{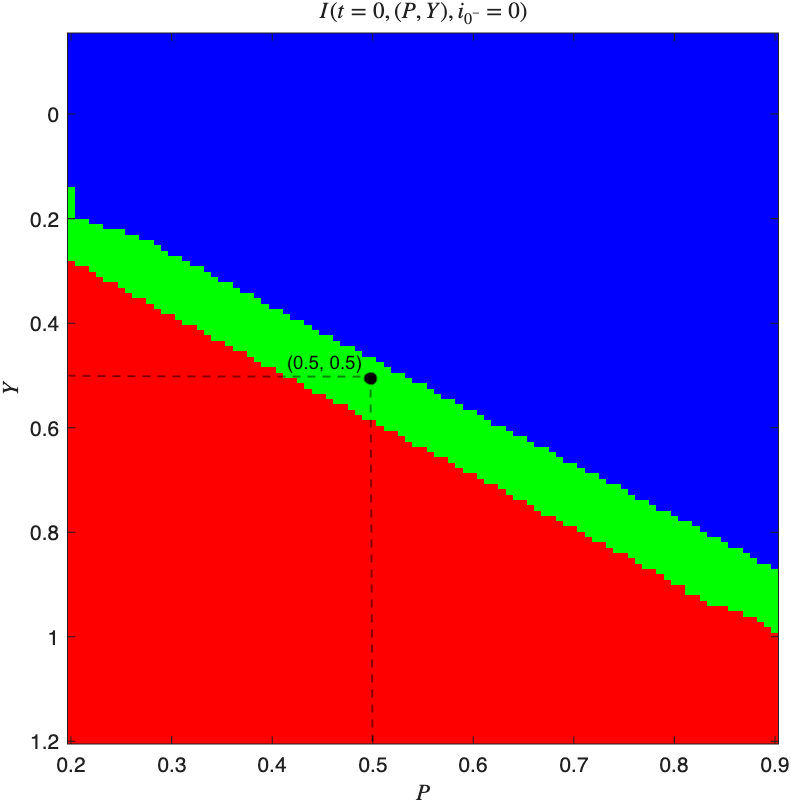}
    %{figures/policy_table.eps}
    \caption{Policy table at time $t=0$ and regime $i_{0^-}=0$. Green: $\to 0$, red: $\to +r$, blue: $\to -r$.}
    \label{fig:policy_table}
\end{figure}
Each pixel corresponds to an action: green, red, and blue indicate that the optimal post-switching
regime is \(0\), \(+r\), and \(-r\), respectively. For example, at the initial time \(t=0\), the point
\(x=(0.5,0.5)\) lies in the green region, so the optimal action is to keep production constant. 
Holding nuclear output fixed and increasing residual demand moves the state into the red region,
where it becomes optimal to increase production. Conversely, reducing residual demand moves the
state into the blue region, where it becomes optimal to decrease production. Similarly, for a fixed
level of residual demand, lower values of \(P\) favor the regime \(+r\), whereas higher values of \(P\)
favor the regime \(-r\).\\

The same interpretation applies at other times and for other initial regimes. The policy tables therefore
provide a state-dependent description of the optimal switching rule over the simulation horizon. \\

We therefore enforce the optimal policy throughout simulation of process $X=(P,Y)$ in Figure \ref{fig:simulation_x_p_1w}. When doing so, we confine ourselves to a short horizon -- of one week -- in the interest of readability. \\ 

Figure~4 shows that the optimal policy generates a close, but imperfect, alignment between nuclear
production and residual demand. When residual demand exceeds nuclear production, the remaining
gap must be covered by the costly back-up technology represented by the shortage penalty in the
objective function. This can occur either because residual demand exceeds installed nuclear capacity
or because, given switching and operating costs, immediate adjustment is not optimal. As shown in
the \(\lambda_2\)-sensitivity analysis below, increasing the shortage penalty induces the plant to track
residual demand more aggressively. 

%The effect of favoring over-production rather then under-production is now evident, with the nuclear output usually exceeding the residual demand. %Figure \ref{subfig:sim_p} reports the optimal switching strategy producing the best fit of $P$ to $Y$ (when starting from regime $i=2$).  

\begin{figure}[H]
    \centering    
    %    \begin{subfigure}{0.49\textwidth}
    %     \centering
    %     \includegraphics[width=\linewidth]{figures/sim_p_1w.eps}
    %     \caption{Optimal control}
    %     \label{subfig:sim_p}
    % \end{subfigure}
   \begin{subfigure}{0.49\textwidth}
        \centering
         \includegraphics[width=\linewidth]{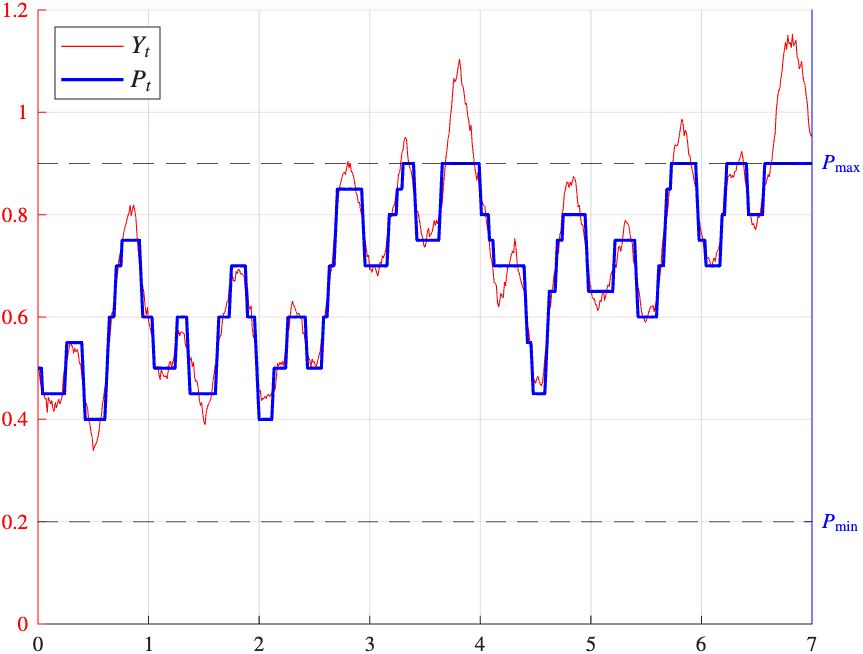}
       % \caption{Realized dynamics of $X$}
        \label{subfig:sim_x}
    \end{subfigure}
    \hfill
     \begin{subfigure}{0.49\textwidth}
         \centering
         \includegraphics[width=\linewidth]{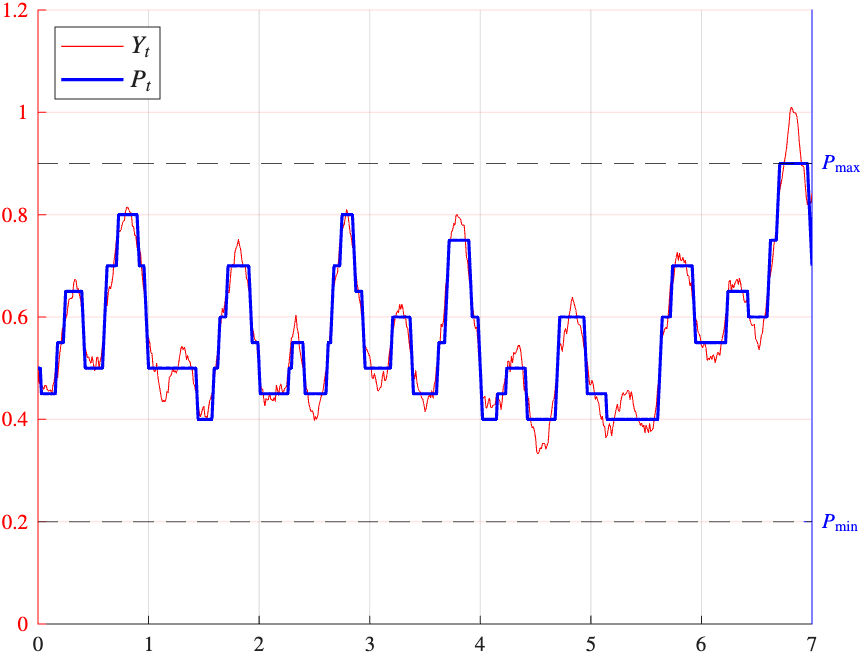}
     %\caption{Optimal control}
         \label{subfig:sim_p}
     \end{subfigure}
    \caption{Two different realizations of the residual demand $\{Y_t\}_{t\in [0,T]}$ (red line) and the associated optimal production rate $\{P_t\}_{t\in [0,T]}$ (blue line) in a simulation environment over one week.}
    \label{fig:simulation_x_p_1w}
\end{figure}

Figures \ref{fig:simulation_x_1m} and \ref{fig:simulation_x_1y}  extend the time horizon to one month and one year, respectively.

\begin{figure}[H]
    \centering
    \includegraphics[width=0.8\linewidth]{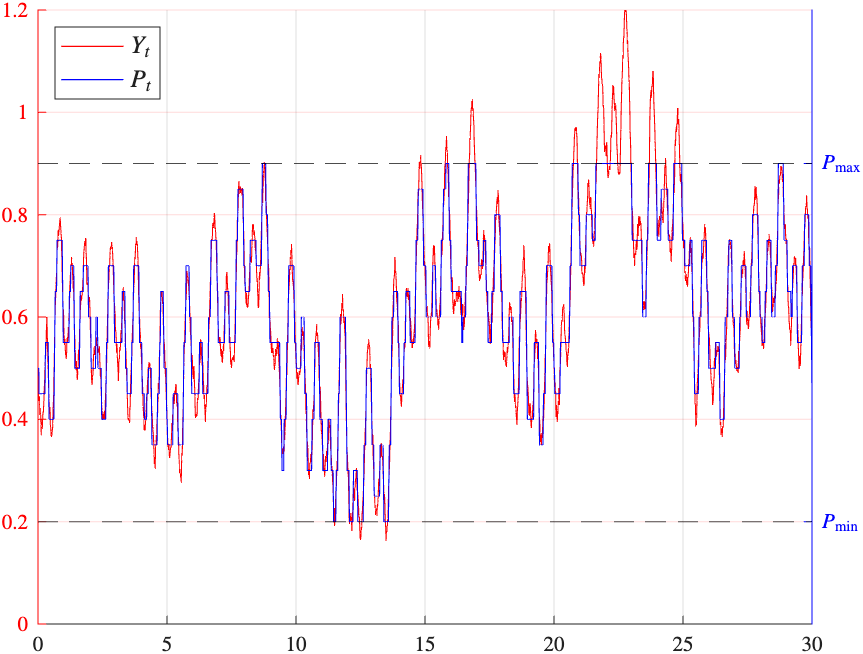}
    \caption{The optimal switching strategy in a simulation environment over one month (30 days).}
    \label{fig:simulation_x_1m}
\end{figure}
\begin{figure}[H]
    \centering
    \includegraphics[width=0.8\linewidth]{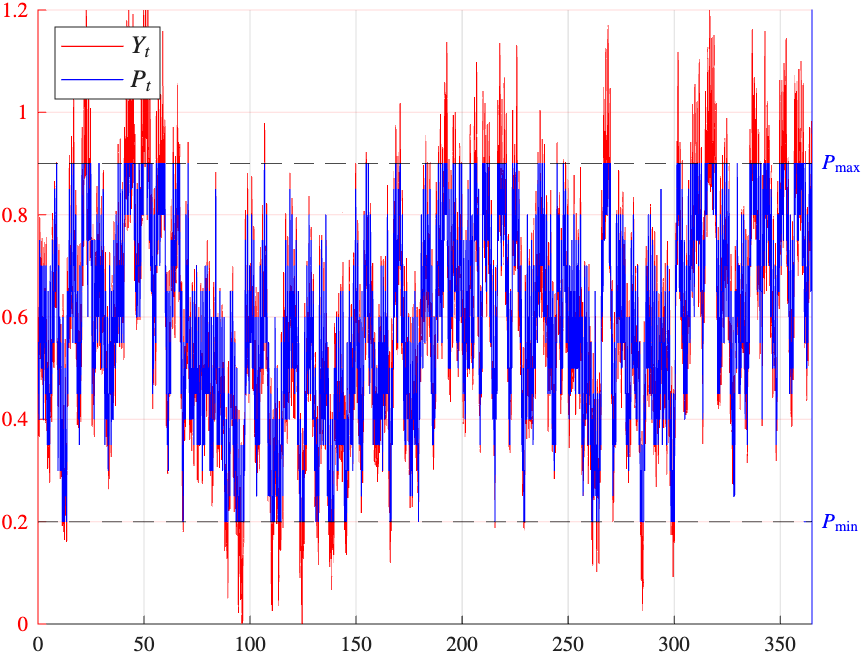}
    \caption{The optimal switching strategy in a simulation environment over one year (365 days).}
    \label{fig:simulation_x_1y}
\end{figure}

\subsubsection{Sensitivity analysis}

%\subsubsubsection{The cost of under-production}

Parameter $\lambda_2$ regulates the cost of under-production: the higher $\lambda_2$, the stronger the constraint to fully meet the residual demand. Figure \ref{fig:simulation_x_1w_L2} shows that the optimal policy reacts precisely as expected. More specifically, we can see that no blackout happens when $\lambda_2$ is large enough (unless the demand exceeds the nominal capacity of the nuclear power plant).   

\begin{figure}[H]
    \centering    
    \begin{subfigure}{0.49\textwidth}
        \centering
        \includegraphics[width=\linewidth]{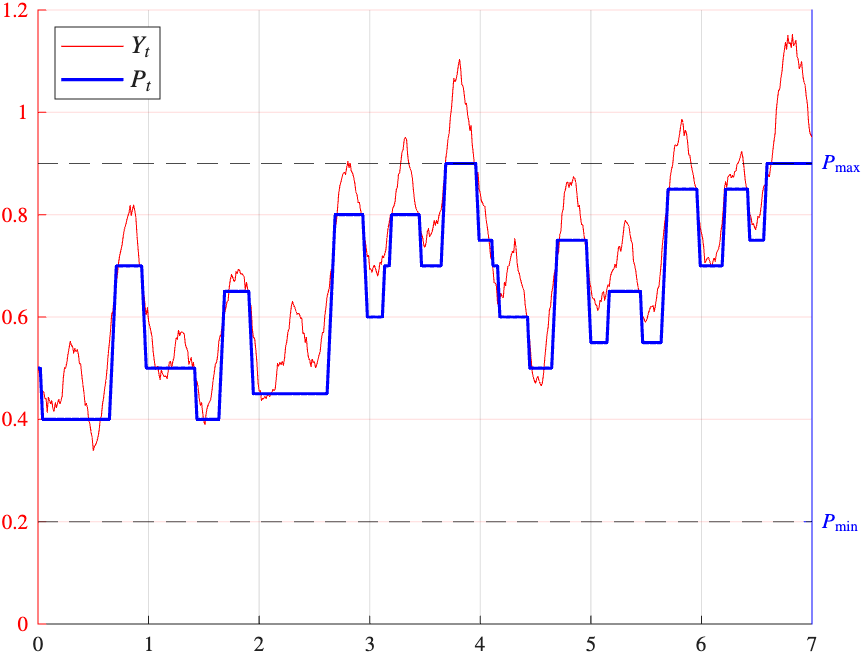}
        \caption{$\lambda_2 = 0.6 \bar\lambda_2$}
        \label{subfig:sim_x_lowL2}
    \end{subfigure}
    \hfill
    \begin{subfigure}{0.49\textwidth}
        \centering
        \includegraphics[width=\linewidth]{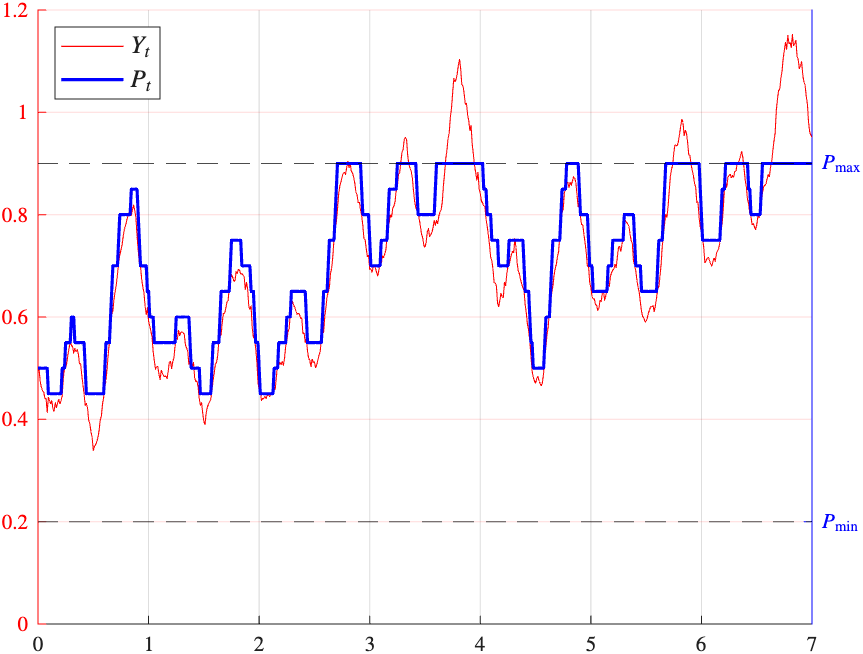}
        \caption{$\lambda_2 = 10 \bar\lambda_2$}
        \label{subfig:sim_x_higL2}
    \end{subfigure}
    \caption{The optimal switching strategy as changing the cost of under-production. Simulation over one week.}
    \label{fig:simulation_x_1w_L2}
\end{figure}

%\subsubsection{Changing the switching costs}

Matrix \(C\) summarizes the costs of switching from regime \(i\) to regime \(j\). Increasing all
switching costs through a common multiplicative factor makes regime changes less attractive.
Figure~\ref{fig:simulation_x_1w_G} illustrates this effect: higher switching costs reduce the frequency of adjustments and
generate smoother production profiles.

\begin{figure}[H]
    \centering    
    \begin{subfigure}{0.49\textwidth}
        \centering
        \includegraphics[width=\linewidth]{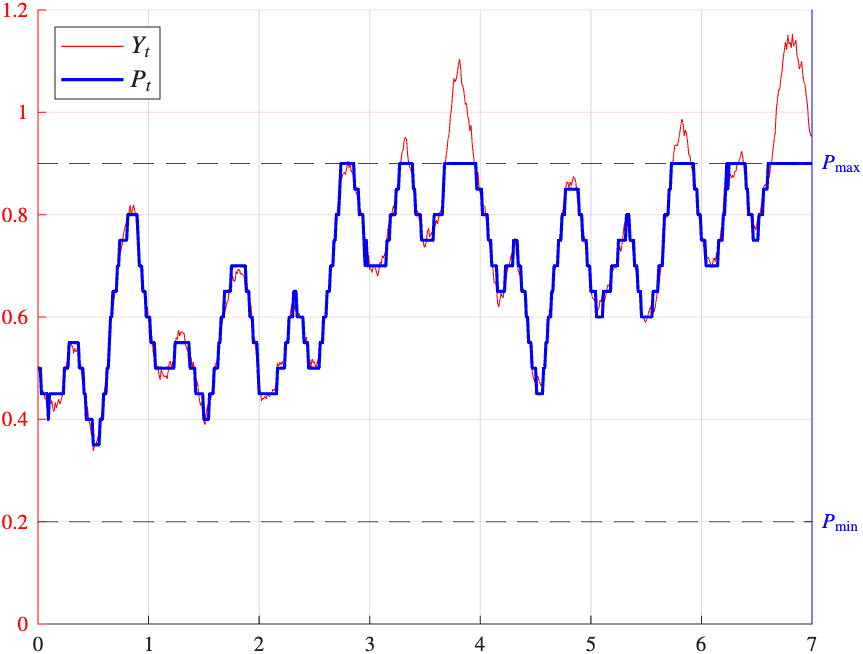}
        \caption{$C = 0.1 \bar C$}
        \label{subfig:sim_x_lowG}
    \end{subfigure}
    \hfill
    \begin{subfigure}{0.49\textwidth}
        \centering
        \includegraphics[width=\linewidth]{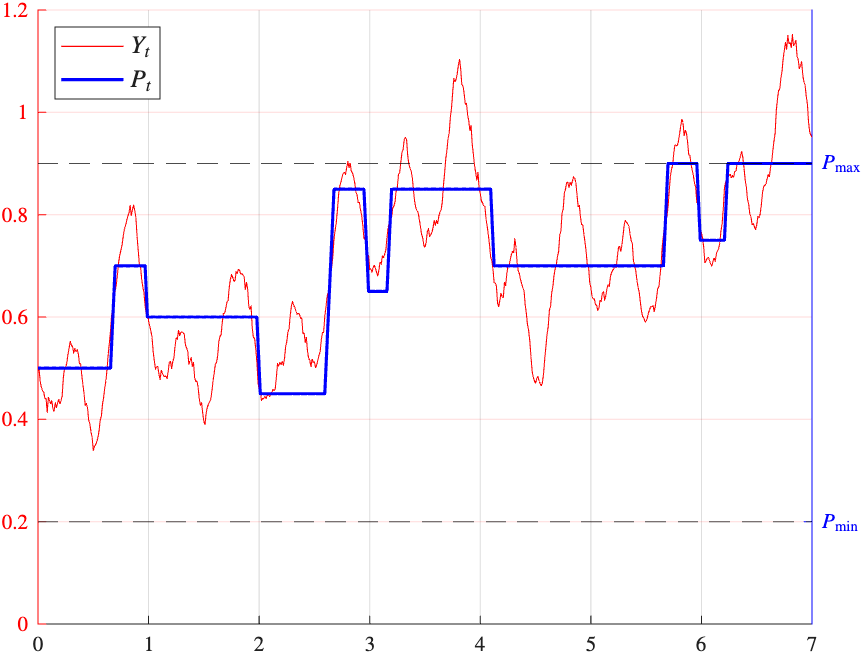}
        \caption{$C = 10 \bar C$}
        \label{subfig:sim_x_higG}
    \end{subfigure}
    \caption{The optimal switching strategy as changing the switching costs. Simulation over one week.}
    \label{fig:simulation_x_1w_G}
\end{figure}

The ramping rate \(r\) controls the maneuverability of the nuclear power plant: higher values of \(r\)
allow nuclear output to adjust more rapidly to changes in residual demand. Figure~\ref{fig:simulation_x_1w_r} highlights an
important feature of the discrete-regime formulation. 
Recall that $r$ is a fixed rate that cannot be optimized in our setting. Therefore increasing it does not necessarily improve tracking.
Large fixed ramping rates may generate overshooting and additional regime changes when smaller adjustments
would be sufficient. In this sense, a lower fixed ramping rate can therefore produce smoother
tracking than a larger one.  

\begin{figure}[H]
    \centering    
    \begin{subfigure}{0.49\textwidth}
        \centering
        \includegraphics[width=\linewidth]{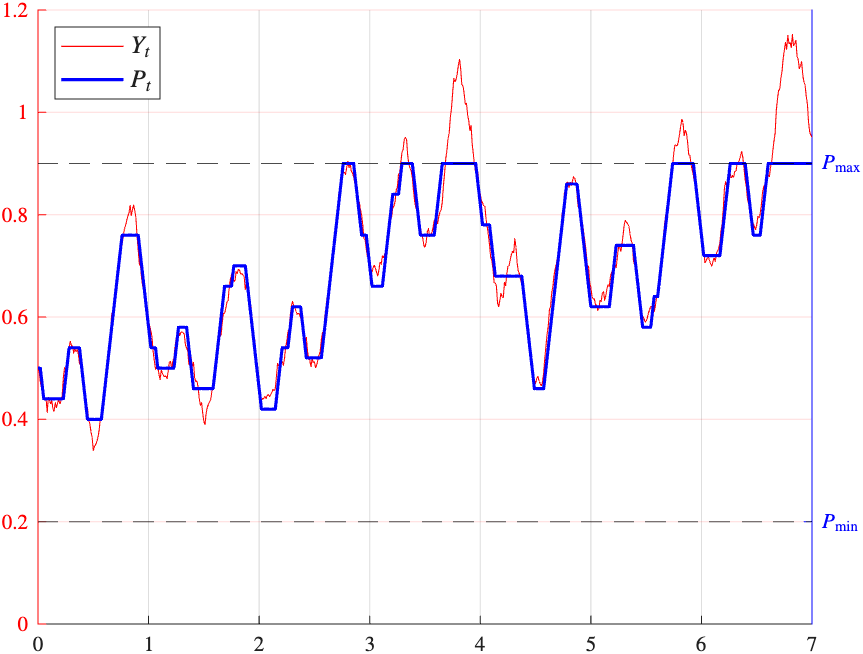}
        \caption{$r = 0.4 \bar r$}
        \label{subfig:sim_x_lowr}
    \end{subfigure}
    \hfill
    \begin{subfigure}{0.49\textwidth}
        \centering
        \includegraphics[width=\linewidth]{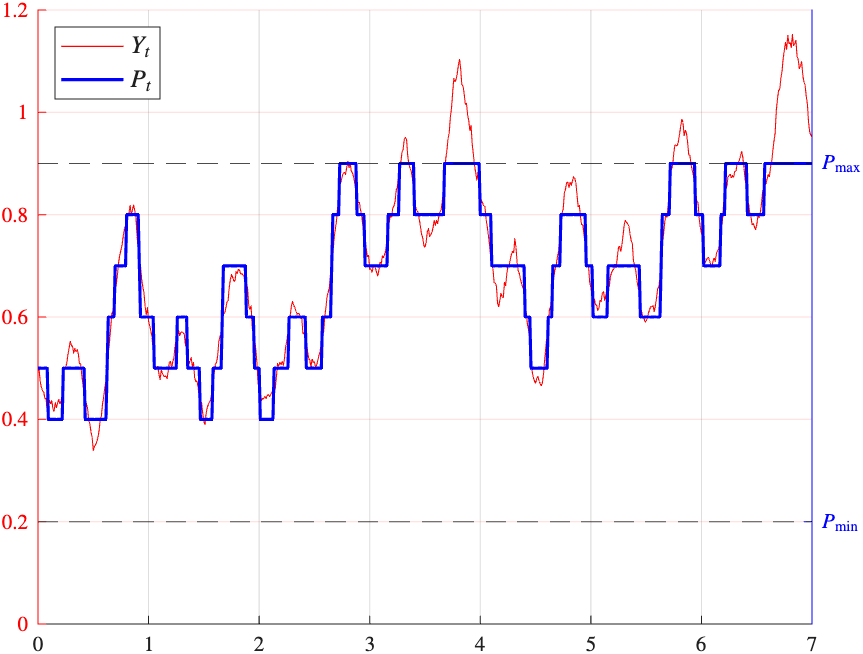}
        \caption{$r = 2\bar r$}
        \label{subfig:sim_x_higr}
    \end{subfigure}
    \caption{The optimal switching strategy as changing the ramping rate. Simulation over one week.}
    \label{fig:simulation_x_1w_r}
\end{figure}

\subsection{Open economy}
We now consider the case where the economy is open. The residual demand at the local level is the same as before. At the "global" level (i.e., for the whole electricity market), we assume that the residual demand $M$ is obtained by rescaling
\begin{align*}
    & \kappa_M = \kappa_Y \qquad \beta_M = nl\beta_Y \qquad \nu_M = \sqrt{n (1+(n-1)\rho)}\nu_Y \\
    & \zeta_M = n\zeta_Y \\
    & \eta_M = n\eta_Y
\end{align*}
where $n$ is the number of countries participating to the market and $\rho$ is the correlation between Brownian motions $W^Y$ and $W^M$. Finally, $l > 0$ is a constant that fixes the average offset of $M$ with respect to the average offset of $Y$.
We also add a temporal shift $s$ accounting for the geographical differences between one single country and the system. \\

For our numerical experiments, we fix
\begin{equation*}
    n = 5 \qquad \rho = 0 \qquad l = 0.4 \qquad s = 7 \ \text{days}
\end{equation*}
thus resulting in the following Figure \ref{fig:resdem_IT_vs_EU}.
\begin{figure}[H]
    \centering
    \includegraphics[width=0.75\linewidth]{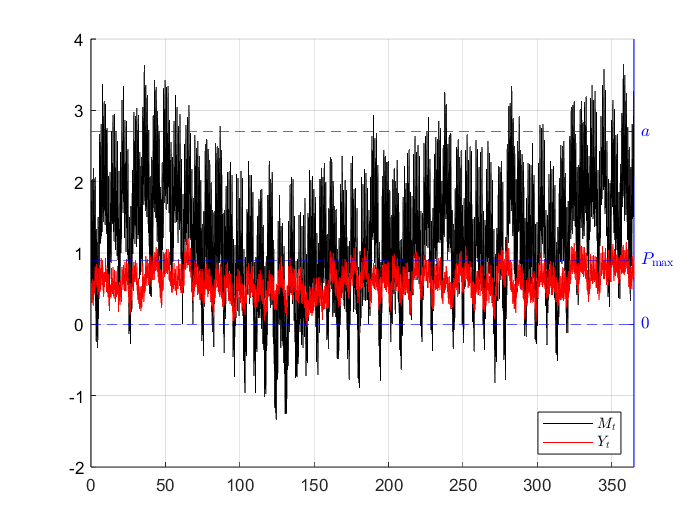}
    \caption{Simulated paths for the residual demand at the local ($Y$) and the global ($M$) level.}
    \label{fig:resdem_IT_vs_EU}
\end{figure}

The figure also clarifies that the global nuclear capacity is fixed at $a  = n L P^{\text{max}}$, with $L=0.6$. This reflects the past investment decisions in nuclear technologies from the countries involved in the system. \\

The interpretation for the setting described above is as follows. We assume that the market residual demand $M$ and the local residual demand $Y$ share the same speed of mean reversion, implying that deviations from their long-run seasonal levels dissipate at the same rate. This reflects the idea that both the local system and the aggregate market are subject to the same persistence in the underlying demand and renewable-generation dynamics. The global baseline level $\beta_M$ scales linearly with the number of market participants $n$, while the additional multiplicative factor $l \neq 1$ captures structural heterogeneity across countries. In particular, it summarizes differences in the endowment of renewable resources, installed renewable capacity, and generation mixes, which translate into different long-run average levels of residual demand. It follows that such a structural heterogeneity does not affect the (strength of the) deterministic seasonal cycle. Random fluctuations naturally scale with $\sqrt{n}$. We assume zero correlation between the driving Brownian motions under the idea that the noise is only dictated by local factors. Finally, a temporal shift introduces a delay in the seasonal cycle as justified by spatial factors (i.e., latitude and longitude). \\

\textbf{Price taker} \\

We start by assuming the company to be a \emph{price taker}. We allow for three different price scenarios 
$$
s_{L}=0,\qquad s_{M}=0.2, \qquad s_{H} = 0.4,
$$
corresponding to the market clearing resource being renewable/nuclear/fossil power, respectively. The bid-ask spread on that price is $\delta = 0.08$. Then, 
$\gamma_1 = s_M + 0.5 \delta = 0.24 $ recovers the same operating costs as it was in the closed-economy case. Observe that the hierarchy $s_M< \gamma_1< s_M+\delta$ ensures that the local company will never buy from the market at price $s_M+\delta$ if it can meet the residual demand by producing internally (at $\gamma_1$). Similarly, it has no incentive in over-producing at $\gamma_1$ if it can sell at $s_M$. \\

As detailed in Section \ref{sec:open} the selling price is determined by the residual demand on the market via the function $\psi(\cdot)$. Figure \ref{fig:price} shows the determination of the price scenario starting from different realization of the process $\{M_t\}_{t\in [0,T]}$.  
\begin{figure}[H]
    \centering    
    \begin{subfigure}{0.49\textwidth}
        \centering
         \includegraphics[width=\linewidth]{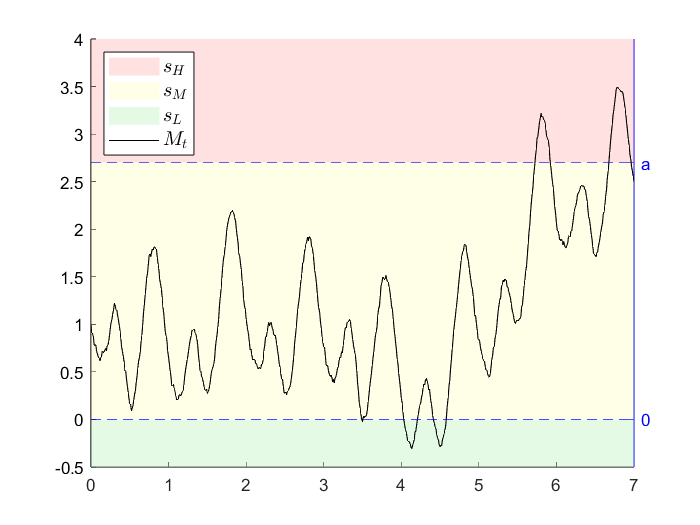}
       % \caption{$r = \frac{1}{5}\tilde r$}
        \label{subfig:M1}
    \end{subfigure}
    \hfill
    \begin{subfigure}{0.49\textwidth}
        \centering
        \includegraphics[width=\linewidth]{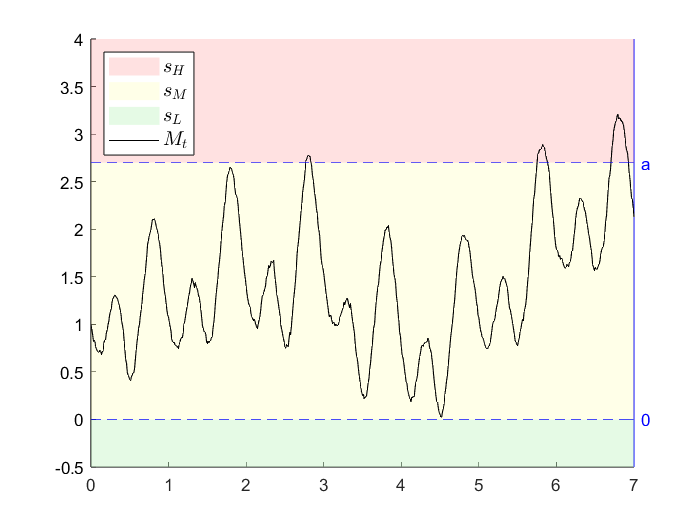}
        %\caption{$r = 2\tilde r$}
        \label{subfig:M2}
    \end{subfigure}
        \caption{Price determination: when the process $\{M_t\}_{t\in [0,T]}$ goes above the threshold $a$ electricity price is high (red area), when it goes below $0$ the price is low (green area). Two different realizations of $\{M_t\}_{t\in [0,T]}$ are represented.}
\label{fig:price}
    \end{figure}
Comparing Figure \ref{fig:simulation_x_1w_3d} with Figure \ref {fig:simulation_x_p_1w} underlines the benefits of an open economy. Indeed, starting from the same realization of $\{Y_t\}_{t\in [0,T]}$ it might now be optimal to underproduce when the price is low (left plot, green areas) and to overproduce when the market price is high (left and right plots, red areas).
\begin{figure}[H]
    \centering    
    \begin{subfigure}{0.49\textwidth}
        \centering
         \includegraphics[width=\linewidth]{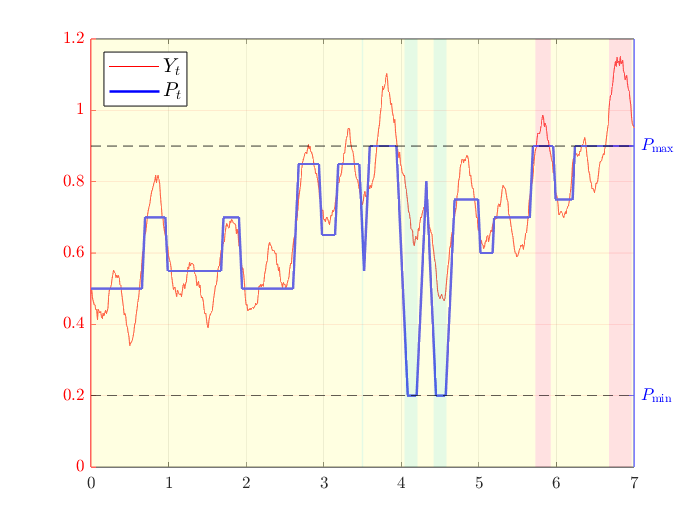}
       % \caption{$r = \frac{1}{5}\tilde r$}
        \label{subfig:1}
    \end{subfigure}
    \hfill
    \begin{subfigure}{0.49\textwidth}
        \centering
        \includegraphics[width=\linewidth]{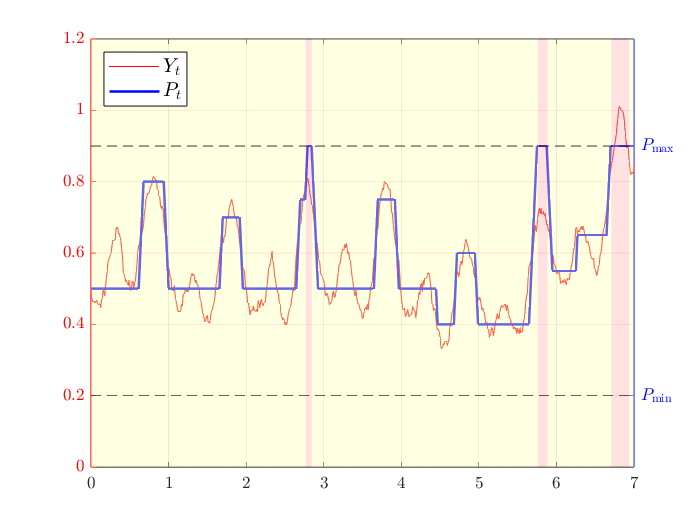}
        %\caption{$r = 2\tilde r$}
        \label{subfig:2}
    \end{subfigure}
    \caption{Two different realizations of the residual demand $\{Y_t\}_{t\in [0,T]}$ (red line) and the associated optimal production rate $\{P_t\}_{t\in [0,T]}$ (blue line). The red and green shaded regions indicate periods with high and low electricity prices, respectively,
while the yellow shaded region corresponds to the medium-price regime.}
    \label{fig:simulation_x_1w_3d}
\end{figure}

\textbf{Price maker} \\

We now consider the case where the company is a \textit{price maker}, meaning that its decisions directly influence the market price of electricity. \\

Figure \ref{fig:price_Z} is the exact analogue of Figure \ref{fig:price} in a context where the firm's production imbalance $q_t$ contributes to the market demand $M_t$.

\begin{figure}[H]
    \centering    
    \begin{subfigure}{0.49\textwidth}
        \centering
         \includegraphics[width=\linewidth]{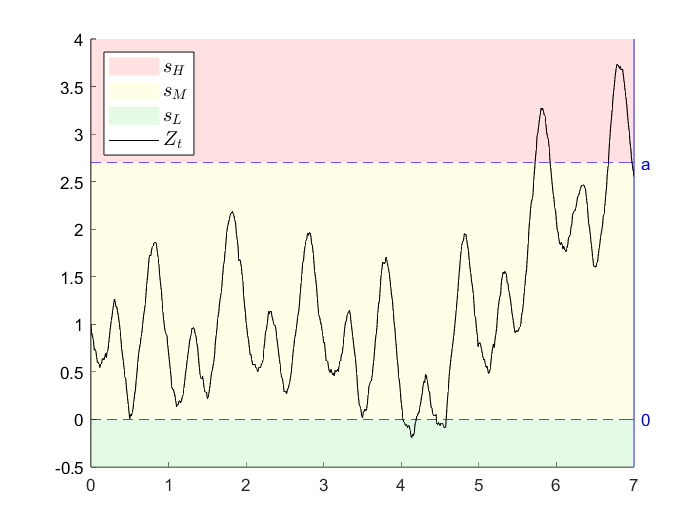}
        \label{subfig:Z1}
    \end{subfigure}
    \hfill
    \begin{subfigure}{0.49\textwidth}
        \centering
        \includegraphics[width=\linewidth]{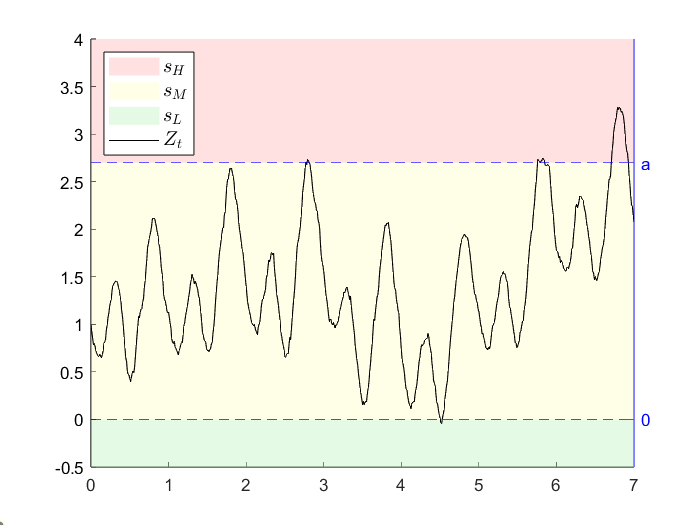}
        \label{subfig:Z2}
    \end{subfigure}
        \caption{Price determination: when the process $\{Z_t\}_{t\in [0,T]}$ goes above the threshold $a$ electricity price is high (red area), when it goes below $0$ the price is low (green area). Two different realizations of $\{Z_t\}_{t\in [0,T]}$ are represented.}
\label{fig:price_Z}
\end{figure}

If this is the case, it is now obvious that the firm's production strategy changes while it still faces the same internal demand $Y_t$ as before. In fact, the company is no longer merely exploiting favorable prices scenarios, but Equation \eqref{runcosts_pricemaker} (vs \eqref{runcosts_pricetaker}) introduces a direct feedback from the firm's own production to the price of electricity, which in turn discourages operation at minimum/maximum capacity unless strictly required by the internal demand. \\

Figure \ref{fig:simulation_x_1w_3d_pricemaker} clarifies this point: although the market price of electricity is temporarily low during day 4 on the left-hand side of Figure \ref{fig:price_Z}, we do not observe the same run to $P^{\text{min}}$ as we did with Figure \ref{fig:simulation_x_1w_3d}. 
This is because the firm suddenly cutting nuclear production would possibly result in increasing the price of electricity via
\begin{equation*}
    P_t \downarrow \quad \implies \quad M_t + Y_t - P_t = Z_t \uparrow  
\end{equation*}
thus ending up buying at a higher price. \\
Similarly, the company will not reach $P^{\text{max}}$ during day 5 on the right-hand side of Figure \ref{fig:simulation_x_1w_3d_pricemaker} despite the fact that the electricity price being high in Figure \ref{fig:price_Z} to avoid lowering their margins while selling their over-production.

\begin{figure}[H]
    \centering    
    \begin{subfigure}{0.49\textwidth}
        \centering
         \includegraphics[width=\linewidth]{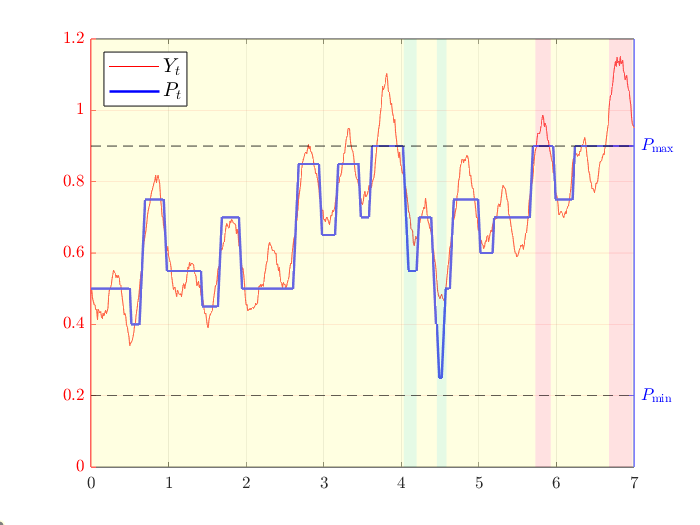}
        \label{subfig:PY1_pm}
    \end{subfigure}
    \hfill
    \begin{subfigure}{0.49\textwidth}
        \centering
        \includegraphics[width=\linewidth]{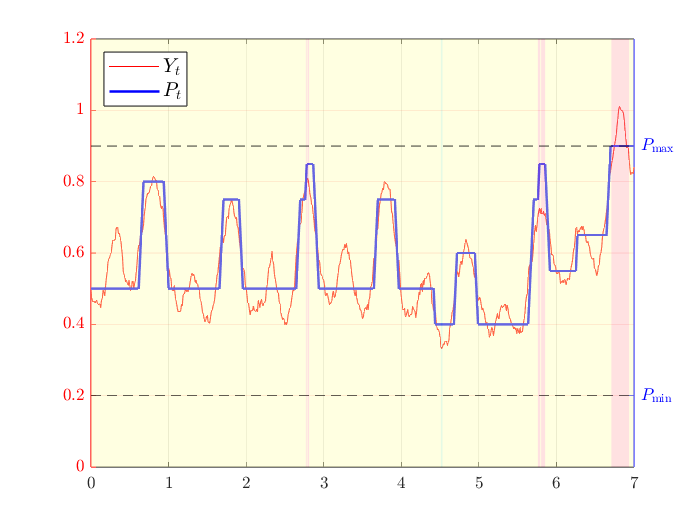}
        \label{subfig:PY2_pm}
    \end{subfigure}
    \caption{Two different realizations of the residual demand $\{Y_t\}_{t\in [0,T]}$ (red line) and the associated optimal production rate $\{P_t\}_{t\in [0,T]}$ (blue line). The red and green shaded regions indicate periods with high and low electricity prices, respectively,
while the yellow shaded region corresponds to the medium-price regime.}
    \label{fig:simulation_x_1w_3d_pricemaker}
\end{figure}

\subsection{Summary of simulated outcomes}

Table~\ref{tab:numerical-summary} summarizes the simulated outcomes associated with the optimal policies displayed in the left-hand panels of Figure \ref{fig:simulation_x_p_1w} and Figures~\ref{fig:price}-\ref{fig:simulation_x_1w_3d_pricemaker}. In the closed-economy case, the nuclear power plant tracks residual demand more closely than in the open-economy scenarios. The average absolute tracking error is \(0.03598\) in the closed economy, compared with \(0.08153\) in the price-taker case and \(0.05797\) in the price-maker case. This tighter tracking is accompanied by a substantially larger number of regime changes: the closed-economy simulation involves \(87\) switches, whereas the price-taker and price-maker simulations involve \(29\) and \(41\) switches, respectively. This reflects the fact that, in the absence of market access, physical adjustment of nuclear output is the main instrument available to manage residual-demand fluctuations. Market access changes the role of deviations between nuclear production and local residual demand. In the open-economy cases, shortages can be covered through purchases and excess production can be sold. This is reflected in the positive transaction volumes reported in Table~\ref{tab:numerical-summary}: purchases amount to \(0.4106\) in the price-taker case and \(0.2644\) in the price-maker case, while sales amount to \(0.1601\) and \(0.1414\), respectively. The possibility of trading therefore partly substitutes for nuclear maneuvering, as shown by the lower number of switches in the open-economy cases. The time spent in shortage and excess states is relatively similar across the three scenarios. Shortage occurs during \(57.14\%\), \(57.44\%\), and \(56.40\%\) of simulated time in the closed-economy, price-taker, and price-maker cases, respectively, while excess production occurs during roughly \(43\%\) of simulated time in all cases. The corresponding energy volumes, however, differ across market environments. Shortage energy is \(0.1812\) in the closed economy, \(0.4106\) in the price-taker case, and \(0.2644\) in the price-maker case. Excess energy is \(0.0707\), \(0.1601\), and \(0.1414\), respectively. Thus, although the time shares are similar, market access changes the magnitude and economic interpretation of imbalances. \\

The comparison between the price-taker and price-maker cases shows how price formation affects optimal operation. When the producer can affect market residual demand, the average tracking error falls from \(0.08153\) to \(0.05797\), purchases fall from \(0.4106\) to \(0.2644\), and the number of switches rises from \(29\) to \(41\). This is consistent with the fact that the price-maker internalizes the feedback from its own imbalance to the market price. The same nuclear technology is therefore operated differently depending on whether imbalances are absorbed domestically, traded at exogenous market prices, or allowed to affect the market price. \\

Finally, total costs are of comparable magnitude across the three simulations, ranging from \(1.200\) to \(1.266\) in model units. These values should not be interpreted as monetary estimates. They are mainly useful for comparing policies within the numerical environment considered here. The robust qualitative message is that shortage penalties, switching costs, ramping constraints, and market access jointly determine the extent to which nuclear production follows residual demand. \\

\begin{table}[H]
\centering
\caption{Summary statistics for simulated optimal load-following policies.}
\label{tab:numerical-summary}
\begin{tabular}{lrrr}
\toprule
Metric & Closed & Price taker & Price maker \\ 
\midrule
Total cost & 1.266 & 1.2 & 1.215 \\ 
Running cost & 1.242 & 1.191 & 1.203 \\ 
Switching cost & 0.02452 & 0.00898 & 0.01176 \\ 
$|P-Y|$ average & 0.03598 & 0.08153 & 0.05797 \\ 
Shortage energy & 0.1812 & 0.4106 & 0.2644 \\ 
Excess energy & 0.0707 & 0.1601 & 0.1414 \\ 
Shortage time & 57.14\% & 57.44\% & 56.40\% \\ 
Excess time & 42.71\% & 42.41\% & 43.45\% \\ 
Number of switches & 87 & 29 & 41 \\ 
At bounds & 18.45\% & 23.51\% & 20.54\% \\ 
Purchases energy & -- & 0.4106 & 0.2644 \\ 
Sales energy & -- & 0.1601 & 0.1414 \\ 
Purchase cost & -- & 0.1023 & 0.08008 \\ 
Sales revenue & -- & 0.03208 & 0.02825 \\ 
\bottomrule
\end{tabular}
\vspace{0.5em}
\parbox{0.94\textwidth}{\small Notes: All quantities are computed on the simulated paths. Cost variables are reported in
model units and include running and switching costs. Production and residual demand are expressed in normalized capacity units and the
cost coefficients are used for numerical comparison. \( |P-Y| \) is the time-average absolute tracking
error. Shortage and excess energy are time integrals of \((Y_t-P_t)^+\) and \((P_t-Y_t)^+\),
respectively. Shortage and excess time are percentages of simulated time. They sum to about 0.9985 as we start at $P_0 = Y_0$ and one time step (15 minutes) account for about 0.15\% of one week. 
At bounds is the combined percentage of time at \(P^{\min}\) or \(P^{\max}\).}
\end{table}

\section{Conclusion and policy implications}\label{sec:conclusion}

The increasing penetration of weather-dependent renewable generation changes the economic role of dispatchable technologies. As renewable output is dispatched with priority, the relevant uncertainty faced by conventional and low-carbon controllable producers is residual demand, namely electricity demand net of renewable production. This paper studies the operating problem of a producer that uses a load-following nuclear power plant to meet stochastic residual demand while accounting for technical limits, finite ramping capability, and costly changes in the direction of production. We formulate the problem as a finite-horizon optimal switching problem, characterize the associated value functions through a system of Hamilton--Jacobi--Bellman quasi-variational inequalities, and compute optimal policies by means of a monotone semi-Lagrangian scheme.

The numerical results highlight the economic trade-offs behind nuclear load following. In the closed-economy, the optimal policy is driven by the relative cost of shortage, excess production, switching, and maneuvering capability. When shortage costs are high, the plant tracks residual demand more closely and the producer is willing to switch regimes more frequently in order to reduce the probability of relying on fossil back-up or incurring reliability costs. Higher switching costs generate wider inaction regions and smoother production profiles, because frequent adjustments become less attractive. The ramping capability of the plant also matters, although its value depends on the way flexibility is represented: a higher ramping capability is valuable when residual demand changes rapidly, but its value is greatest when the producer can choose how much of that capability to use. If the ramping rate is fixed, a very large value of \(r\) may generate overshooting and additional regime changes rather than smoother tracking.

Allowing the producer to trade electricity in an external market changes the nature of the operating problem. In the open-economy case, nuclear production is no longer determined only by local residual demand. Market prices create an additional opportunity cost of producing, buying, or selling electricity. When external prices are low, it may be optimal to reduce nuclear production and cover part of local demand through purchases. When prices are high, maintaining or increasing nuclear output can be valuable even if domestic residual demand is low, because excess production can be sold. Market integration therefore affects the private value of load following and the social value of dispatchable low-carbon capacity. The same physical technology may be operated differently depending on balancing-market conditions, interconnection, and the price formation mechanism.

The policy implications are threefold. First, flexibility should be treated as a distinct economic attribute of low-carbon technologies. Energy-only revenues based on average production may fail to compensate the value of controllable capacity that reduces shortage risk, fossil back-up, or curtailment in periods of high residual-demand volatility. Market designs that remunerate availability, ramping capability, and balancing services can therefore be important when dispatchable low-carbon units are expected to support renewable integration.

Second, policy should account for the cost of providing flexibility. Load following is not costless: frequent changes in operating regimes may increase wear, maintenance requirements, fuel-management constraints, and operational complexity. A market design that encourages flexible nuclear operation should therefore balance the system value of responsiveness against the private and technical costs borne by plant operators. In the language of the model, switching costs are not a secondary detail; they shape the size of the inaction regions and the frequency with which the plant adjusts production.

Third, interconnection and market access can partly substitute for domestic flexibility, but they also modify incentives. In a closed system, excess low-carbon production has little value when it cannot be stored or exported, while shortage is costly because it requires fossil back-up or involuntary curtailment. In an open system, imports and exports make the opportunity cost of nuclear production state-dependent. This suggests that policies toward nuclear flexibility, storage investment, and interconnection should be evaluated jointly. The value of nuclear load following is likely to be higher in systems with limited hydro resources, limited storage, and weak interconnection, and lower in systems where reservoir hydropower, storage, or cross-border trade already provide abundant flexibility.

The framework developed in this paper is intentionally stylized and several extensions are natural. Future work could focus on a more realistic modelling of residual demand in continuous time and also try to estimate cost parameters on market data, introduce plant-level technical constraints in greater detail, model the ramping rate as a continuous control rather than a fixed operating regime, and allow the producer to affect market prices through strategic behavior. Another useful extension would be to combine nuclear load following with storage or multiple generating units, in order to quantify substitution and complementarity among different sources of low-carbon flexibility. These extensions would further connect the stochastic-control approach developed here with empirical questions on electricity-market design and decarbonization policy.

\subsection*{Funding Statement}
The authors acknowledge financial support under the National Recovery and Resilience Plan (NRRP):
\small{Mission 4, Component 2, Investment 1.1, Call for tender No. 1409 published on 14.9.2022 by the Italian Ministry of University and Research (MUR), funded by the European Union – NextGenerationEU – Project Title: 
Probabilistic Methods for Energy
Transition – CUP G53D23006840001 - Grant Assignment Decree No. 1379 adopted on 01/09/2023
by MUR. \\
F. Baschetti acknowledges financial support from the project PRICE (A New Paradigm for High Frequency Finance) financed by the Italian Ministry MUR under the program FIS 2021, Prot. FIS3055.

\subsection*{Acknowledgments}
We are grateful to Emilio Barucci, Peter Tankov, Tiziano Vargiolu for useful comments that helped us improve the paper. We also thank the participants of Energy Finance Italia 11 (University of Padova, 2026), the 2nd Latin American Congress on Industrial and Applied Mathematics (Universidad Técnica Federico Santa María, 2026),  the 3rd Workshop on Quantitative Methods for Green Finance (University of Palermo, 2026) and the special session \textit{Dynamic Models under Uncertainty in Economics and Finance} at the 15th American Institute of Mathematical Sciences Conference (University of Athens, 2026) for helpful feedback on a preliminary version of the work.

\subsection*{Declaration of competing interest}
The authors declare that they have no known competing financial interests or personal relationships 
that could have appeared to influence the work reported in this paper.

\subsection*{Declaration of generative AI and AI-assisted technologies in the manuscript preparation process}
During the preparation of this work, the authors used ChatGPT to assist with language editing and copy-editing. The authors reviewed and edited all AI-assisted text and take full responsibility for the content of the manuscript.

\bibliographystyle{alpha}
\bibliography{sample}

\appendix
\section*{Appendix}

\section{Proof of Theorem \ref{thm:dyn_prog_eqn}}\label{sec:proofs}

We only provide a sketch of the proof in the interest of brevity and simplicity (see Remark \ref{rmk:visco}). \\

Fix \(0<K<\infty\). For the regularized problem, the coefficients of the state process are continuous and Lipschitz in the state variable on the computational domain. Hence, for every admissible switching strategy, the controlled state equation admits a unique strong solution. Since the set of regimes \(\mathbb I\) is finite, the running and terminal costs are continuous, and the switching-cost matrix $C$ satisfies the no-free-loop condition \eqref{eq:noFreeLoop}, the finite-horizon switching problem is well posed and does not admit cost-free instantaneous switching cycles.

The dynamic programming principle, continuity of the value functions, and the viscosity characterization follow from standard results for finite-horizon Markovian optimal switching problems; see, for instance, \cite[Ch.~5, Sec.~5.3]{pham2009continuous}, \cite{DjehicheHamadenePopier2009}, and \cite{ElAsri2016}. We recall the formal argument, assuming smoothness of the value functions, to make the structure of the HJB-QVI explicit.

First of all, the boundary condition is satisfied as one has
$$
\lim_{t\to T} v^i(t,x) = v(T,x) = 0,\qquad \forall i\in \mathbb I.
$$
Take now $(t,x,i) \in [0,T) \times Q \times \mathbb{I}$ as a starting point. We consider two special cases on the set of possible actions available at $(t,x,i)$.
\begin{itemize}
    \item[(i)] Continue with regime $i$: let $h>0$ and, for any stopping time $\vartheta\in (t,T)$, denote $\vartheta_h:=\min(\vartheta, t+h)$. From the DPP, comparing $v^i$ with cost of not switching regime over $[t, t+h)$ one obtains 
    \begin{equation*}
        v^i(t,x) \leq \mathbb{E} \left[ \int_t^{\vartheta_h} f(X^{x,i}_s)  \dd s + v^{i}(\vartheta_h,X^{x,i}_{\vartheta_h}) \right].
    \end{equation*}
    from which by classical arguments involving the use of Ito's formula and mean value theorem one gets, for $h\to 0$, the ``no-switch'' inequality
  % $\{$ 'continue with regime $i$','switch to some $j \neq i$' $\}$.

  %   \item[(A1)] The 'no-switch' inequality \\

  %   Take $h>0$ and set $\theta = t+h$. \\ 
  %   The DPP gives
  %   \begin{equation*}
  %       v^i(t,x) \leq \mathbb{E} \left[ \int_t^{t+h} f(X^{x,i}_s) ds + \sum_{\tau_n \in [t,t+h)} c_{\iota_{n-1},\iota_n} + v^{I^i_{t+h}}(t+h,X^{x,i}_{t+h}) \right].
  %   \end{equation*}
  %   Hence, choosing not to switch over $[t,t+h)$, we have that
  %   \begin{equation*}
  %       v^i(t,x) \leq \mathbb{E} \left[ \int_t^{t+h} f(X^{x,i}_s) ds + v^i(t+h,X^{x,i}_{t+h}) \right].
  %   \end{equation*}
  %   Now
  %   \begin{equation*}
  %       \mathbb{E} \left[ \int_t^{t+h} f(X^{x,i}_s) ds \right] = h f(x) + o(h),
  %   \end{equation*}
  %   and Ito formula
  %   \begin{align*}
  %       v^i(t+h,X^{x,i}_{t+h}) & = v^i(t,x) + \int_t^{t+h} \left( \partial_t v^i + \mathcal{L}^i v^i \right)(s,X^{x,i}_s) ds + \int_t^{t+h} \nabla_x v^i(s,X^{x,i}_s)^\top \sigma^i(s,X^{x,i}_s) dW_s
  %   \end{align*}
  %   yields
    % \begin{align*}
    %     \mathbb{E}\left[ v^i(t+h,X^{x,i}_{t+h}) \right] & = v^i(t,x) + \mathbb{E} \left[ \int_t^{t+h} \left( \partial_t v^i + \mathcal{L}^i v^i \right)(s,X^{x,i}_s) ds \right] \\
    %     & = v^i(t,x) + h \left( \partial_t v^i + \mathcal{L}^i v^i \right)(t,x) + o(h).
    % \end{align*}
    % Putting everything together, we obtain
    % \begin{equation*}
    %     v^i(t,x) \leq v^i(t,x) + h \left[ \left( \partial_t v^i + \mathcal{L}^i v^i \right)(t,x) + f(x) \right] + o(h)
    % \end{equation*}
    % and finally taking the limit $h \to 0$ gives
    \begin{equation}\label{eqn:HJB_partial}
        -\left( \partial_t v^i + \mathcal{L}^iv^i \right) (t,x) - f(x) \leq 0.
    \end{equation}

    \item[(ii)] Immediate switch: let $j \in \mathbb{I}$, $j \neq i$, and consider the   control strategy that immediately (i.e. at time $t$) switches from regime $i$ to regime $j$, and then continues optimally from regime $j$, up to time $T$. By the very definition of $v^i$ one has
     $$
     v^i(t,x) \leq v^j(t,x) + c_{i,j}, \ \forall j \in \mathbb{I}.
     $$
     Therefore the following ``do-switch'' inequality holds
    \begin{equation}\label{eqn:HJB_obstacle}
        v^i(t,x) - \min_{ j \neq i} (v^j(t,x) + c_{i,j}) \leq 0.
    \end{equation}
\end{itemize}

Inequalities \eqref{eqn:HJB_partial} and \eqref{eqn:HJB_obstacle} prove the subsolution property
\begin{equation}\label{eq:subsol}
    \max \left\{ -\left( \partial_t v^i + \mathcal{L}^iv^i \right) (t,x) - f(x), v^i(t,x) - \min_{ j \neq i} (v^j(t,x) + c_{i,j}) \right\} \leq 0.
\end{equation}
We now wish to show that  equality actually holds.
% For this, we recognize that over a short time interval $[t,t+h]$ one can only choose between actions (A1): 'continue with regime $i$' and (B1): 'switch to some $j \neq i$'. Then the idea is to pick the action with the lowest cost, according to:
% \begin{equation}\label{eqn:DPP_small_h}
%     v^i(t,x) = \min \left\{ \mathbb{E} \left[ \int_t^{t+h} f(X^{x,i}_s) ds + v^i(t+h,X^{x,i}_{t+h}) \right], \min_{j \neq i} (v^j(t,x)+c_{i,j}) \right\}.
% \end{equation}
% \begin{itemize}
%     \item[(A2)] The 'no-switch' region \\  
Let us now assume that   $v^i(t,x) < \min_{j \neq i}(v^j(t,x)+c_{i,j})$, thus meaning that switching immediately at time $t$ is strictly worse than continuing with regime $i$. Therefore, we are in the so called ``no-switch region''. By continuity of the function $G(t,x):=v^i(t,x) - \min_{j \neq i}(v^j(t,x)+c_{i,j})$, we know that there exists $\rho > 0$ such that 
 \begin{equation*}
     G(s,\xi)<0  \qquad \forall (s,\xi) \in [t,t+\rho)\times B_\rho(x), 
\end{equation*}
where $B_\rho(x)=\{\xi \in Q : |x-\xi|<\rho\}$, meaning that in a sufficiently small time interval it is strictly optimal to continue with regime $i$. As a consequence, there exists a sufficiently small $h>0$ such that, by the DPP, 
 \begin{equation*}
 v^i(t,x) =  \mathbb{E} \left[ \int_t^{\vartheta_h} f(X^{x,i}_s) \dd s + v^i(\vartheta_h,X^{x,i}_{\vartheta_h}) \right],
 \end{equation*}
and the same reasoning as before (point (i)) implies
    \begin{equation*}
        -\left( \partial_t v^i + \mathcal{L}^iv^i \right) (t,x) - f(x) = 0,
    \end{equation*}
    leading to the equality in \eqref{eq:subsol}.
If instead
\[
v^i(t,x)=\min_{j\neq i}\left(v^j(t,x)+c_{i,j}\right),
\]
then the obstacle term in \eqref{eq:subsol} is equal to zero. Since the no-switch inequality \eqref{eqn:HJB_partial} still holds, the maximum in \eqref{eq:subsol} is again equal to zero. Therefore, in the smooth case, the value functions satisfy the system of variational inequalities. Uniqueness follows from the comparison principle for systems of variational inequalities with interconnected obstacles; see, for instance, \cite[Ch.~5, Sec.~5.3]{pham2009continuous}, \cite{DjehicheHamadenePopier2009}, and \cite{ElAsri2016}.
% So here
%     \begin{equation*}
%         \max \left\{ -\left( \partial_t v^i + \mathcal{L}^iv^i \right) (t,x) - f(x), v^i(t,x) - \min_j (v^j(t,x) + c_{i,j}) \right\} = 0.
%     \end{equation*}
    
%     \item[(B2)] The 'do-switch' region \\

%     Suppose $v^i(t,x) = \min_{j \neq i}(v^j(t,x)+c_{i,j})$, thus meaning that switching immediately at time $t$ is the best option. \\
%     At the same time, we know from before that
%     \begin{equation*}
%         -\left( \partial_t v^i + \mathcal{L}^iv^i \right) (t,x) - f(x) \leq 0.
%     \end{equation*}
%     Again
%     \begin{equation*}
%         \max \left\{ -\left( \partial_t v^i + \mathcal{L}^iv^i \right) (t,x) - f(x), v^i(t,x) - \min_j (v^j(t,x) + c_{i,j}) \right\} = 0.
%     \end{equation*}

% We conclude the proof by showing what happens at terminal time $t=T$, and immediately notice that $J^\alpha(T,x,i) = 0$. Then, $v^i(T,x) = 0$ by definition.

\begin{remark}\label{rmk:visco}
    Observe that we have been silently assuming that $v^i \in C^{1,2}([0,T] \times \mathbb{R}^d) \ i \in \mathbb{I}$ throughout the sketch above. The value functions $\{v^i, i\in \mathbb I\}$ are typically not this smooth in practice, even under strong regularity conditions on the data. This is due to the intrinsic nonlinear structure of the problem together with its degeneracy (there is indeed no diffusion in the first space direction). As already mentioned working in the framework of viscosity solutions allows to rigorously characterize the value function by means of the  HJB-QVI. We avoid these technicalities, which do not alter the spirit of the proof.  
\end{remark}

\section{Estimation}\label{sec:estimation}

\subsection{Theory}  
Estimation of an Ornstein-Uhlenbeck dynamics hinges on its discrete-time counterpart, the AR(1) process. \\

In fact, we have discrete observations
\begin{equation*}
    y_i = Y_{t_i}
\end{equation*}
over a time grid with mesh $\Delta$:
\begin{equation*}
    t_i = i \Delta, \qquad i = 1,\dots,N.
\end{equation*}
Let us therefore define $\phi := e^{-\kappa \Delta}$ so that
\begin{equation*}
\sigma^2_\Delta = \frac{\nu^2}{2\kappa} (1-\phi^2)
\end{equation*}
matches the true variance of $Y_t$. \\

Then, our discrete model reads as
\begin{equation}\label{eqn:AR1}
    x_i := y_i - \phi y_{i-1} = (1-\phi)\beta + \kappa \sum_j \left( \zeta_j I_j^{\zeta,i} + \eta_j I_j^{\eta,i} \right) + \sqrt{\sigma^2_\Delta} \varepsilon_i, \qquad \varepsilon_i \sim \mathcal{N}_{0,1} 
\end{equation}
where   
\begin{align*}
    & I_j^{\zeta,i} := \frac{[\kappa \cos(\omega_j t_i) + \omega_j \sin(\omega_j t_i)] - \phi [\kappa \cos(\omega_j t_{i-1}) + \omega_j \sin(\omega_j t_{i-1})]}{\kappa^2 + \omega_j^2} \\
    & I_j^{\eta,i} := \frac{[\kappa \sin(\omega_j t_i) - \omega_j \cos(\omega_j t_i)] - \phi [\kappa \sin(\omega_j t_{i-1}) - \omega_j \cos(\omega_j t_{i-1})]}{\kappa^2 + \omega_j^2}.
\end{align*}

Observe that, for any fixed $\phi$ (or $\kappa$), model \eqref{eqn:AR1} is amenable of treatment by OLS regression in the coefficients
\begin{equation*}
    \vartheta_\phi = \begin{bmatrix}
        \beta & \zeta_1 & \dots & \zeta_{|\omega|} & \eta_1 & \dots & \eta_{|\omega|}
    \end{bmatrix}^\top.
\end{equation*}
For this, we define 
\begin{equation*}
    B_\phi = \begin{bmatrix}
        b_1 & \dots & b_N
    \end{bmatrix}
    \in \R^{N+2|\omega|}
\end{equation*}
with
\begin{equation*}
    b_i = \begin{bmatrix}
        (1-\phi) & \kappa I_1^{\zeta,i} & \dots & \kappa I_{|\omega|}^{\zeta,i} & \kappa I_1^{\eta,i} & \dots & \kappa I_{|\omega|}^{\eta,i}
    \end{bmatrix}.
\end{equation*}
Hence, we conclude that
\begin{equation*}
    \hat{\vartheta}_\phi = (B_\phi^\top B_\phi)^{-1} B_\phi^\top x
\end{equation*}
with regression residuals
\begin{equation*}
    r_\phi = x - B_\phi \hat{\vartheta}_\phi.
\end{equation*}

The estimation procedure exploits the Gaussian transition structure implied by the Brownian-driven OU model, thus reducing maximum likelihood estimation to a simple least squares problem.
The procedure is as follows.
\begin{enumerate}
    \item Profile out $\vartheta$ via OLS regression as described above. \\
    In practice
    \begin{equation*}
        \phi^\star = \arg\min_\phi \sum (r_\phi)^2
    \end{equation*}
    identifies
    \begin{align*}
        & B := B_\phi^\star \\
        & \hat{\vartheta} := \hat{\vartheta}_{\phi^\star} = (B^\top B)^{-1} B^\top x \\
        & r := r_{\phi^\star} = x - B\hat{\vartheta}.
    \end{align*}
    \item Come back to 
    \begin{equation*}
        \hat{\kappa} = - \frac{\log \phi^\star}{\Delta}
    \end{equation*}
    and compute $$\hat{\sigma}^2_\Delta = \frac{1}{N} \sum r^2.$$ Then
    \begin{equation*}
        \hat{\nu} = \sqrt{\frac{2\hat{\kappa}\hat{\sigma}^2_\Delta}{1-(\phi^\star)^2}}.
    \end{equation*}
\end{enumerate} 

\subsection {Practice}
Figure \ref{fig:res_dem_est} shows that the estimated seasonal mean captures the main low-frequency variation in residual demand. 

\begin{figure}[H]
    \centering    
    \begin{subfigure}{0.49\textwidth}
        \centering
         \includegraphics[width=\linewidth]{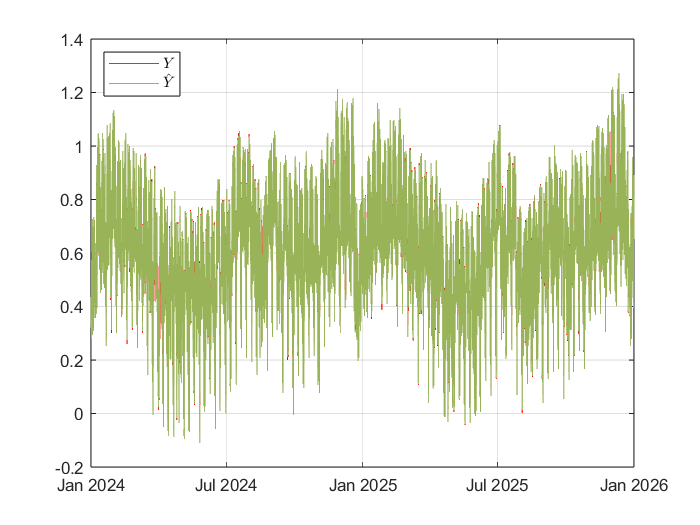}
         \caption{2024-2025}
        \label{subfig:large}
    \end{subfigure}
    \hfill
    \begin{subfigure}{0.49\textwidth}
        \centering
        \includegraphics[width=\linewidth]{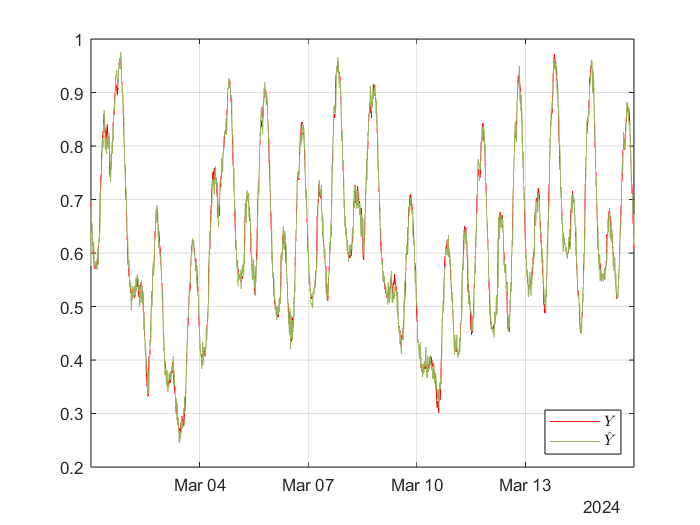}
        \caption{zoom}
        \label{subfig:zoom}
    \end{subfigure}
        \caption{Residual demand net of renewable resources in Italy. True (blue) vs Estimated (red)}
\label{fig:res_dem_est}
\end{figure}

However, the standardized residuals display heavier tails than implied by the Gaussian benchmark, as illustrated by the histogram and QQ-plot in Figure~\ref{fig:normal?}.

\begin{figure}[H]
    \centering    
    \begin{subfigure}{0.49\textwidth}
        \centering
         \includegraphics[width=\linewidth]{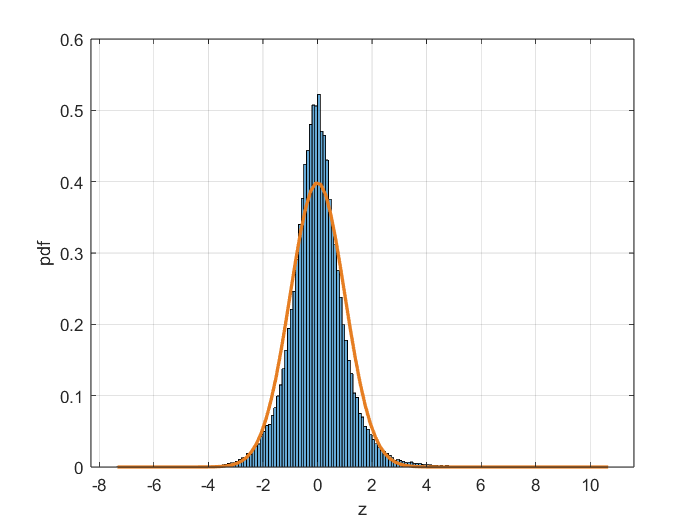}
         \caption{Histogram}
        \label{subfig:histo}
    \end{subfigure}
    \hfill
    \begin{subfigure}{0.49\textwidth}
        \centering
        \includegraphics[width=\linewidth]{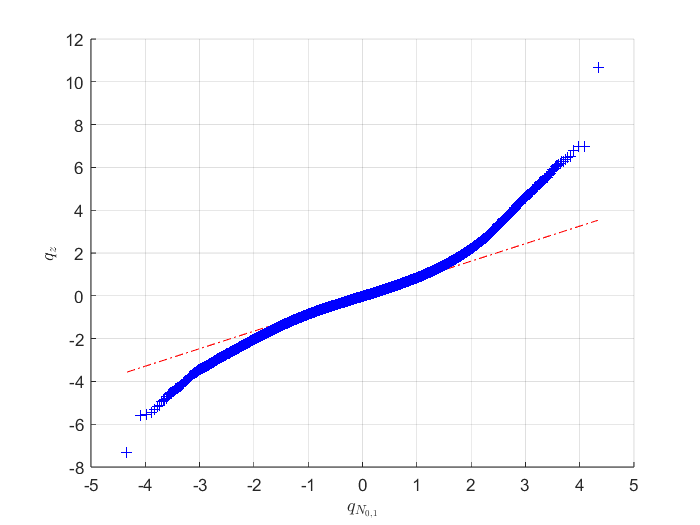}
        \caption{QQ-plot}
        \label{subfig:qqplot}
    \end{subfigure}
        \caption{Visual tests for Normality of the (standardized) regression residuals $z$}
\label{fig:normal?}
\end{figure}

Additionally, Figure \ref{fig:ACF} shows that standardized residuals and squared residuals are strongly serially correlated, thus suggesting that innovations exhibit a) a complex dependence structure and b) time-varying volatility.

\begin{figure}[H]
    \centering    
    \begin{subfigure}{0.49\textwidth}
        \centering
         \includegraphics[width=\linewidth]{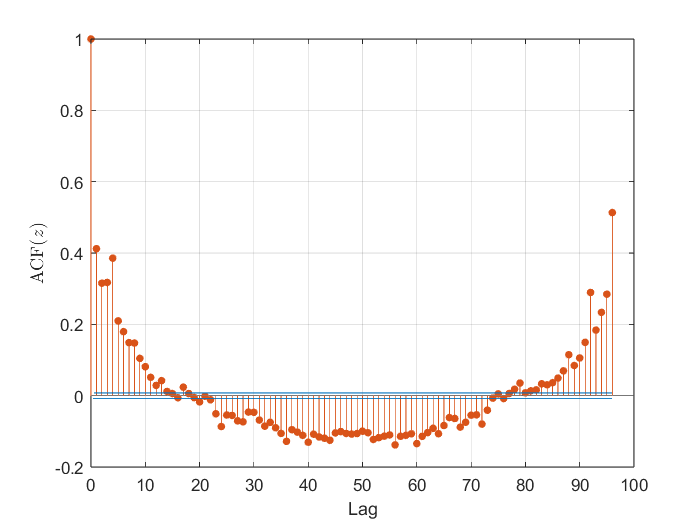}
         \caption{$z$}
        \label{subfig:ACF_z}
    \end{subfigure}
    \hfill
    \begin{subfigure}{0.49\textwidth}
        \centering
        \includegraphics[width=\linewidth]{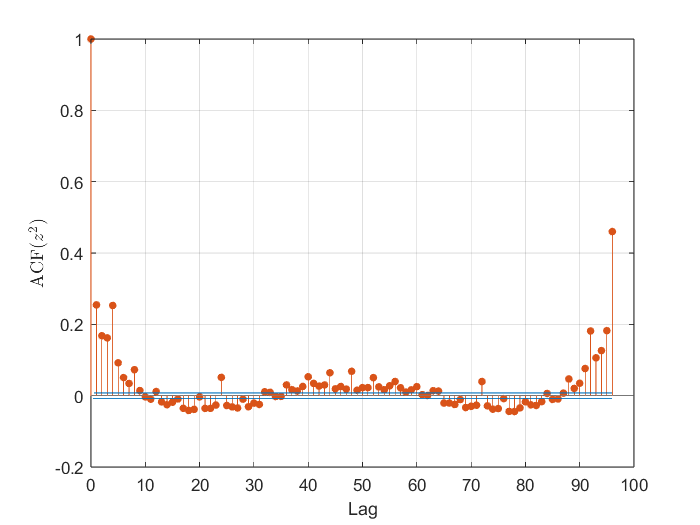}
        \caption{$z^2$}
        \label{subfig:ACF_z2}
    \end{subfigure}
        \caption{Autocorrelation function of the regression residuals (left) and squared residuals (right)}
\label{fig:ACF}
\end{figure}

In this sense, the fit in Figure \ref{fig:res_dem_est} should be interpreted as a parsimonious reduced-form approximation rather than as a full forecasting model. In fact, our numerical analysis simply wishes to study the operating implications of the switching problem under a tractable diffusion benchmark. \\

A richer statistical model of residual demand could include non-Gaussian innovations, time-varying volatility, or additional dependence structures. We leave such extensions to future work and use the OU specification as a transparent benchmark consistent with the diffusion framework underlying the HJB-QVI and the semi-Lagrangian scheme. The purpose of this work is in fact to define a simple -- yet reasonable -- framework for load following via a nuclear power plant. The qualitative structure of the switching problem does not depend on the specific parametric form chosen for residual demand. However, the numerical policy regions and simulated trajectories naturally depend on the calibrated dynamics.

\end{document}